\theoremstyle{plain}
\newtheorem{thm}{Theorem}[section]
\newtheorem{prop}[thm]{Proposition}
\newtheorem{cor}[thm]{Corollary}
\theoremstyle{definition}
\newtheorem{dfn}[thm]{Definition}
\theoremstyle{remark}
\newtheorem{rmk}[thm]{Remark}
\newcommand{\sM}{\mathscr{M}}
\newcommand{\cF}{\mathcal{F}}
\newcommand{\cL}{\mathcal{L}}
\newcommand{\cO}{\mathcal{O}}
\newcommand{\cS}{\mathcal{S}}
\newcommand{\cV}{\mathcal{V}}
\newcommand{\cW}{\mathcal{W}}
\newcommand{\frakh}{\mathfrak{h}}
\DeclareMathOperator{\uhp}{\mathfrak{h}}
\DeclareMathOperator{\rk}{rk}
\DeclareMathOperator{\Tr}{Tr}
\DeclareMathOperator{\Hom}{Hom}
\DeclareMathOperator{\Spec}{Spec}
\DeclareMathOperator{\Aut}{Aut}
\DeclareMathOperator{\Pic}{Pic}
\DeclareMathOperator{\GL}{GL}
\DeclareMathOperator{\SL}{SL}
\DeclareMathOperator{\PSL}{PSL}
\DeclareMathOperator{\Ind}{Ind}
\newcommand*{\df}{\mathrel{\vcenter{\baselineskip0.5ex \lineskiplimit0pt
                     \hbox{\scriptsize.}\hbox{\scriptsize.}}} =}
\providecommand{\twomat}[4]{\left(\begin{matrix}#1&#2\\#3&#4\end{matrix}\right)}
\providecommand{\stwomat}[4]{\left(\begin{smallmatrix}#1&#2\\#3&#4\end{smallmatrix}\right)}
\providecommand{\stwovec}[2]{\left(\begin{smallmatrix}#1\\#2\end{smallmatrix}\right)}
\newcommand{\QQ}{\mathbf{Q}}
\newcommand{\CC}{\mathbf{C}}
\newcommand{\ZZ}{\mathbf{Z}}
\newcommand{\PP}{\mathbf{P}}
\newcommand{\RR}{\mathbf{R}}
\newcommand{\DD}{\mathbf{D}}
\DeclareMathOperator{\GM}{GM}
\DeclareMathOperator{\End}{End}
\DeclareMathOperator{\Ext}{Ext}
\DeclareMathOperator{\lcm}{lcm}
\begin{document}
\title[Fuchsian groups of genus zero]{Vector bundles and modular forms for Fuchsian groups of genus zero}
\author{Luca Candelori and Cameron Franc}
\date{}

\maketitle

\begin{abstract}
This article lays the foundations for the study of modular forms transforming with respect to representations of Fuchsian groups of genus zero. More precisely, we define \emph{geometrically weighted} graded modules of such modular forms, where the graded structure comes from twisting with all isomorphism classes of line bundles on the corresponding compactified modular curve, and we study their structure by relating it to the structure of vector bundles over orbifold curves of genus zero. We prove that these modules are free whenever the Fuchsian group has at most two elliptic points. For three or more elliptic points, we give explicit constructions of indecomposable vector bundles of rank two over modular orbifold curves, which give rise to non-free modules of geometrically weighted modular forms. 
\end{abstract}

\tableofcontents

\section{Introduction}

The history of vector valued modular forms dates back to Poincar\'e's work on Fuchsian functions and linear differential equations \cite{Poincare1}, \cite{Poincare2}. In recent years, vector valued modular forms have played a main role in the mathematics spawned by the proof of the monstrous moonshine conjecture \cite{Borcherds}. Indeed, these modular forms arise as generating series for characters of rational vertex operator algebras, and thus form an important part of their representation theory --- see \cite{Gannon:book} for a survey. Most of the work in this context has so far focused on the case of level one, that is, vector valued modular forms for $\SL_2(\ZZ)$. The aim of this article is to extend the basic structure theory of vector valued modular forms from the case of level one to a general genus zero Fuchsian group. This more general theory turns out to be much richer, and it is closely related to the classification of vector bundles over orbifold curves of genus zero \cite{GeigleLenzing}, \cite{Crawley-Boevey}.

A key result in the theory of vector valued modular forms for $\SL_2(\ZZ)$ is the free module theorem of Marks-Mason \cite{MarksMason}, which asserts that the module of modular forms associated to a rank $r$ representation of $\SL_2(\ZZ)$ is free of rank $r$ over the ring of scalar valued modular forms of level one. In \cite{CandeloriFranc}, we deduced this result from a splitting principle for vector bundles over the modular orbifold of level one, thus establishing the first geometric link between the study of vector bundles over modular curves and the structure theory of vector valued modular forms. Even for subgroups of $\SL_2(\ZZ)$ of small index the picture quickly becomes more complicated. For example, during the summer of 2016 Geoff Mason observed (private correspondence with the authors) that the free module theorem fails when $\SL_2(\ZZ)$ is replaced by its subgroup $\Gamma^2$ of index two. Mason and Gannon then established similar negative results for a number of other subgroups of small index (unpublished note). On the geometric side, we were able to show that the splitting principle for vector bundles holds for $\Gamma^2$ and other subgroups of small index, thus raising the question of whether the structure of vector bundles over modular orbifolds of higher level has any connection at all with that of vector valued modular forms of the corresponding level. One of the key insights of the present paper is the following: if one is willing to work over a ring of {\em geometrically weighted} modular forms, which is slightly larger than the classical ring of scalar valued modular forms, then the corresponding modules of geometrically weighted modular forms possess useful commutative algebraic properties. For example, we prove a free module theorem for geometrically weighted modular forms over  certain Fuchsian groups, including $\Gamma^2$ --- see Corollary \ref{cor:moduleSplitting} and the discussion at the end of Subsection \ref{ss:indexTwo}. Around the same time that we established these results, Terry Gannon used a different argument to prove a similarly modified free module theorem for $\Gamma^2$ and a number of other Fuchsian groups (unpublished note).  Also around this time, Richard Gottesman established, as part of his forthcoming PhD thesis, a free module theorem for $\Gamma_0(2)$ using the argument pioneered in \cite{MarksMason}. Over $\Gamma_0(2)$ the ring of geometrically weighted modular forms is equal to that of classical modular forms, so this case can also be treated using the geometric approach of the present paper.

These results point to a need for a general study of vector valued modular forms on arbitrary Fuchsian groups. To simplify matters we restrict in this paper to Fuchsian subgroups of $\PSL_2(\RR)$ of the first kind, with finite covolume and finitely many cusps. Further, we restrict to subgroups with the property that the corresponding Riemann surface they define is of genus zero when the cusps are included. Such groups are said to be of genus zero, and the corresponding compact orbifolds are called \emph{orbifold lines}, or \emph{orbilines}. Orbifold lines are sufficienly rigid to allow us to reach some nontrivial general conclusions, but not so rigid that they are uninteresting --- for example, orbifold lines are some of the main protagonists in monstrous moonshine \cite{ConwayNorton}, \cite{Borcherds}, and the study of vector bundles on orbifold lines has been connected with the representation theory of certain Kac-Moody algebras \cite{Lenzing}, \cite{GeigleLenzing}, \cite{Meltzer:MemAMS}, \cite{Crawley-Boevey}. Our results can be extended to Fuchsian subgroups of $\SL_2(\RR)$ by replacing orbifold lines with $\mu_2$-gerbes over them, as shown in \cite{CandeloriFranc} for the case of $\SL_2(\ZZ)$. We leave this slight generalization open to further exploration. 

In Section \ref{orbifoldLines} we define orbifold lines and recall some standard facts about them. In Section \ref{indecomposables} we study the structure of  vector bundles over orbifold lines. As is well-known, every vector bundle over $\PP^1$ splits into a sum of line bundles \cite{Groth}. We establish a similar splitting principle for orbifold lines with at most two orbifold points using a proof modeled after the standard cohomological argument for the usual projective line (note that this case includes the compactification of the orbifold quotient $[\PSL_2(\ZZ)\backslash \uhp]$ studied in detail in \cite{CandeloriFranc}). We then recall a general result of Crawley-Boevey (\cite{Crawley-Boevey}, Theorem \ref{thm:CB}) classifying indecomposable vector bundles over orbifold lines in terms of an associated Kac-Moody algebra. In the remainder of the section we focus on an orbifold line with three orbifold points with stabilizers of size $(2,2,n)$, and we give an explicit description of an indecomposable vector bundle of rank $2$ on such an orbifold line (Theorem \ref{thm:ExistenceOfRank2Indec}) that is independent of Crawley-Boevey's existence result. Later in Section \ref{s:examples} we write down the transition functions of such indecomposable bundles explicitly using modular forms.

In Section \ref{coherentSheaves} we study the structure of coherent sheaves over an orbifold line. The key idea is to embed an orbifold line into weighted projective space and then to identify it with a {\em weighted projective line}. The theory of coherent sheaves over weighted projective lines is due to Geigle-Lenzing \cite{GeigleLenzing}, and it follows the usual theory for algebraic curves \cite{SerreFAC}. More precisely, if $X$ denotes an orbifold line with $n+1$ orbifold points, and if $W= \Pic(X)$ denotes the Picard group of isomorphism classes of line bundles on $X$, then we describe a natural projective embedding of $X$ inside a weighted projective space $\PP(W)$ associated to $W$ (Theorem \ref{thm:weightedProjectiveLineIsomorphism}).  This embedding is used in Proposition \ref{prop:equivalenceCoherentSheaves} to give a concrete realization of the category of coherent sheaves on $X$ in terms of a category of sheaves of $W$-graded modules. For any coherent sheaf $\cF$ on $X$ there is a \emph{Serre-type functor} \cite{GeigleLenzing}
\[
  \GM_*(\cF) \df \bigoplus_{x \in W} H^0(X,\cF(x))
\]
to the category of $W$-graded modules over the $W$-graded projective coordinate ring $S(X)$ of the embedding $X \hookrightarrow \PP(W)$. This functor can be used to link the structure of vector bundles over $X$ to that of $W$-graded modules over $S(X)$. For example, Geigle-Lenzing proved (\cite{GeigleLenzing}, 5.1) that if $\cV$ is a vector bundle on $X$, then $\GM_*(\cV)$ is always maximal Cohen-Macaulay over $S(X)$. We also deduce from Theorem \ref{thm:SplittingTheorem} that  $\GM_*(\cV)$ is free over $S(X)$ whenever $X$ has at most two orbifold points (see Corollary \ref{cor:moduleSplitting}). In general, the module $\GM_*(\cV)$ decomposes into a direct sum of indecomposable maximal Cohen-Macauley modules, each indexed by a certain Kac-Moody algebra constructed from $X$. 

When $X$ is obtained by adding the cusps to a quotient $[\Gamma \backslash \uhp]$ for some Fuchsian group $\Gamma \subseteq \PSL_2(\RR)$, the $W$-graded modules $\GM_*(\cV)$ seem to be better substitutes for the usual $\ZZ$-graded modules of modular forms associated to a representation of $\Gamma$. In Section \ref{s:vvmfs} we apply this observation to the study of modular forms for $\Gamma$. We begin by explaining briefly how the constructions of \cite{CandeloriFranc} generalize to arbitrary Fuchsian groups. If $\cV(\rho)$ denotes the canonical extension to $X$ of the local system on $[\Gamma \backslash \uhp]$ associated to a representation $\rho$ of $\Gamma$, then we introduce the module of \emph{geometrically weighted} $\rho$-valued modular forms $\GM(\rho) = \GM_*(\cV(\rho))$. Following the above definition, this is obtained by twisting $\cV(\rho)$ with {\em all} line bundles on $X$ and putting the global sections together into a single module, rather than twisting only with the standard line bundles $\cL_k$ of weight $k$ modular forms. In general this module $\GM(\rho)$ is strictly larger than the usual module $M(\rho)$ of modular forms associated to $\rho$, but it has the advantage that the preceding results on $\GM_*(\cV(\rho))$ apply to it. It is a module over the coordinate ring $S(\Gamma) = S(X)$ of the projective embedding $X \hookrightarrow P(W)$ discussed above, which likewise is typically larger than the usual ring of scalar valued modular forms for $\Gamma$. Using this construction, the classification of vector bundles over orbifold lines directly gives a classification of the modules $\GM(\rho)$. In particular, we deduce that when $\Gamma$ has at most 2 elliptic points, the free module theorem holds for $\GM(\rho)$ (Theorem \ref{thm:FMT}). 

The constructions of the ring $S(\Gamma)$ and the modules $\GM(\rho)$ are geometric in origin, and one would like an automorphic description for them. In certain cases one can use the exact sequence
\[
  0 \to \Pic_0(X) \to \Pic(X) \stackrel{\deg}{\to} \frac 1m \ZZ \to 0,
\] 
where $m$ is the least common multiple of the orders of the stabilzers of the elliptic points for $\Gamma$, to give a convenient automorphic description for $S(\Gamma)$ and $\GM(\rho)$. One interpets $\Pic_0(X)$ as those bundles arising from characters of $\Gamma$ that are trivial on the stabilizers of the cusps, the so-called \emph{cuspidal characters}. This reduces finding an automorphic description for all bundles in $\Pic(X)$ --- thereby giving an automorphic description for $S(\Gamma)$ and $\GM(\rho)$ --- to giving an automorphic description of any single bundle on $X$ of degree $\frac 1m$. In certain lucky cases the weight shifting bundle $\cL_2$ is of appropriate degree, and one finds that $S(\Gamma)$ is the ring generated by the modular forms associated to all the cuspidal characters in all weights, and similarly for $\GM(\rho)$. But in general $\cL_2$ will have too large a degree, and $S(\Gamma)$ and $\GM(\rho)$ contain slightly more than the forms generated by the cuspidal characters. Nevertheless, if $\cL$ denotes a line bundle on $X$ of degree $\frac 1m$, then it pulls back to a trivializable bundle on the Stein space $\uhp$, and this pullback can be described by \emph{some} automorphy factor. It is an open problem to describe a choice of automorphy factor that works for any given Fuchsian group.

In Section \ref{s:examples} we end the paper by describing these constructions in detail for several Fuchsian groups: $\PSL_2(\ZZ)$ and its unique normal subgroups of index two and three, as well as a nonnormal subgroup of index $4$. One of the most surprising findings in these explorations arose from the explicit automorphic construction of indecomposable bundles of rank two on the subgroups of index $3$ and $4$ considered here. The modular construction of these bundles involves certain indecomposable representations that are not unitarizable, and so the corresponding modular forms fall outside the scope of the classical theory of scalar valued modular forms. These vector valued forms are essentially antiderivatives of classical scalar valued modular forms, and in this way the ratios of period integrals of classical scalar valued modular forms between elliptic points arise naturally in the construction of indecomposable vector bundles. This suggests that the CM values of vector valued modular forms associated with certain nonunitary representations may hold some arithmetic interest, although we do not make any general claims in this direction.

As mentioned above, some of our results have been obtained independently by Mason and Gannon. The authors thank them for several useful discussions about the contents of this paper, and for sharing some of their private notes with us.

We end this introduction with a brief discussion about notation regarding Fuchsian groups. In this note Fuchsian groups are subgroups of $\PSL_2(\RR)$, rather than $\SL_2(\RR)$, although we will commit the standard abuse of writing elements of $\PSL_2(\RR)$ as matrices. We retain the notation
\begin{align*}
T &= \twomat 1101, & S &= \twomat{0}{-1}10, & R &= ST,
\end{align*}
of our previous papers, but here $\chi \colon \PSL_2(\ZZ) \to \CC^\times$ denotes the unique character of $\PSL_2(\ZZ)$ determined by $\chi(T) = e^{2\pi i\frac 16}$. In \cite{CandeloriFranc} we used $\chi$ to denote the generating character of $\SL_2(\ZZ)$; in this paper, $\chi$ is the square of that character. That is, $\chi$ denotes the character of $\eta^4$ and it satisfies $\chi(S) = -1$ and $\chi(R) = \zeta^2$, where $\zeta = e^{2\pi i\frac 13}$.

\section{Orbifold lines}
\label{orbifoldLines}

An {\em orbifold curve} $X$ is a compact, connected, complex orbifold of dimension one with finitely many orbifold points $(P_0, \ldots, P_n)$ with non-trivial cyclic stabilizers of orders $(p_0, \ldots, p_n)$, respectively. The {\em genus} of $X$ is the genus of its underlying Riemann surface. An {\em orbifold line} is an orbifold curve of genus 0. Since the only compact, connected Riemann surface of genus 0 is $\PP^1$, the sequence $(p_0,\ldots, p_n)$ determines the orbifold line $X$ uniquely up to re-labeling of the orbifold points $P_i$. We call $(p_0,\ldots, p_n)$ the {\em signature} of $X$. 

Let $\cL$ be a line bundle over $X$. The restriction $\cL|_P$ to an orbifold point $P \in \{ P_0, \ldots, P_n\}$ is a 1-dimensional vector space together with a one-dimensional representation
$$
\mu(\cL,P):\ZZ/p\ZZ \longrightarrow \CC^{\times},
$$
where $p$ is the order of the stabilizer of $P$. This representation is completely determined by the element $\iota(\cL,P)\in \{0,\ldots, p-1\}$ given by $ \mu(\cL,P)(1) = e^{\frac{2\pi i}{p} \iota(\cL,P)}$.

\begin{dfn}
The integer $\iota(\cL,P)\in \{0,\ldots, p-1\}$ is called the {\em isotropy} of $\cL$ at $P$. 
\end{dfn}

Let now $\Pic(X)$ be the group under $\otimes$ of all line bundles over $X$ up to isomorphism. There is an isomorphism   $\Pic(X)\cong \mathrm{Cl}(X)$ with the class group, i.e. all divisors modulo principal divisors. A divisor for an orbifold curve $X$ is a formal linear combination
$$
D = \sum_{P \in X} \frac{a_{P}}{|\mathrm{Stab_X(P)}|}\, P, \quad a_P\in \ZZ,
$$
and a principal divisor is a divisor of a rational function on $X$, where the zeroes and poles of the function are appropriately rescaled by the order of the stabilizers. The {\em degree} of a divisor $D$ is  
$$
\deg(D) = \sum_{P \in X} \frac{a_{P}}{|\mathrm{Stab_X(P)}|} \in \frac{1}{m}\ZZ,
$$
where $m = \mathrm{lcm}(p_0,\ldots,p_n)$. We thus obtain the familiar  degree homomorphism 
$$
\deg: \Pic(X) \longrightarrow \frac{1}{m}\ZZ
$$
by associating to a line bundle  $\cL$ the degree of the divisor of a rational section of $\cL$.

\begin{prop}
\label{prop:picardGroupOfStackyLines}
Let $X$ be an orbifold line of signature $(p_0,\ldots,p_n)$. Then 
\begin{itemize}
\item[(a)]
$\Pic(X)$ is a rank one abelian group generated by $n+1$ elements $x_0,\ldots, x_n$ with relations $$p_0x_0=p_2x_2=\ldots=p_nx_n.$$

\item[(b)] Let $m = \lcm(p_0,\ldots, p_n)$. Then the degree homomorphism induces an exact sequence
$$
0 \rightarrow \Pic_0(X) \longrightarrow \Pic(X) \stackrel{\deg}\longrightarrow \frac{1}{m}\ZZ \rightarrow 0,
$$
that identifies $\Pic_0(X)$ with the torsion subgroup of $\Pic(X)$.
\item[(c)] There is an  isomorphism
\[
  \Pic_0(X)  \cong \frac{\prod_{j=0}^n\ZZ/p_j\ZZ}{\langle (1,1,\ldots, 1)\rangle}.
\]
\end{itemize}
\end{prop}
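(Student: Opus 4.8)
The plan is to pass through the stated isomorphism $\Pic(X)\cong\mathrm{Cl}(X)$ and compute the class group by hand using the map $\pi\colon X\to\PP^1$ to the coarse space. Throughout, note that the function field of $X$ is $\CC(\PP^1)$, and that at an orbifold point $P_j$ the order of vanishing of a rational function is a multiple of $p_j$, since such a function is a rational function of the coarse local coordinate $w_j=z_j^{p_j}$. Consequently every principal divisor on $X$ has integer coefficients and total degree $0$, exactly as on $\PP^1$.

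For part (a), set $x_j=[\tfrac1{p_j}P_j]$, and fix a non-orbifold point $Q_0$ (one exists, since $\PP^1$ is infinite). First, the $x_j$ generate: any divisor, after discarding its orbifold part, is supported on non-orbifold points, all of which are linearly equivalent to $Q_0$; and pulling back under $\pi$ a rational function on $\PP^1$ with a simple zero at $P_j$ and a simple pole at $Q_0$ gives a function whose divisor on $X$ is $p_j\cdot\tfrac1{p_j}P_j-Q_0$, whence $p_jx_j=[Q_0]$ independently of $j$, and $[Q_0]=p_0x_0=\cdots=p_nx_n$ is the common class $\pi^*\mathcal O_{\PP^1}(1)$. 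To see there are no further relations, consider the surjection $\ZZ^{n+1}\to\Pic(X)$, $e_j\mapsto x_j$: if $\sum_j c_jx_j=0$, then $\sum_j\tfrac{c_j}{p_j}P_j=\mathrm{div}(f)$ for some $f\in\CC(\PP^1)^\times$; comparing the order of $f$ at each $P_j$ forces $p_j\mid c_j$, say $c_j=p_jd_j$, and then $\mathrm{div}_{\PP^1}(f)=\sum_j d_jP_j$ has degree $0$, so $\sum_j d_j=0$ and hence $\sum_j c_je_j=\sum_j d_j(p_je_j-p_0e_0)$ lies in $R:=\langle p_ie_i-p_0e_0: 1\le i\le n\rangle$. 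Thus $\Pic(X)\cong\ZZ^{n+1}/R$, which has rank one since $R$ is free of rank $n$.

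For part (b), $\deg(x_j)=\tfrac1{p_j}$, so the image of $\deg$ is the subgroup of $\QQ$ generated by the $\tfrac1{p_j}$, which equals $\tfrac1m\ZZ$ because $\gcd_j(m/p_j)=1$: for each prime $\ell$, some $p_j$ carries the full power of $\ell$ occurring in $m$, so $\ell\nmid m/p_j$. This gives the exact sequence. Since $\Pic(X)$ is finitely generated of rank one and $\tfrac1m\ZZ\cong\ZZ$ is torsion-free of rank one, $\Pic_0(X)=\ker\deg$ has rank zero, hence is finite; conversely any torsion class maps to $0$ in $\tfrac1m\ZZ$, so $\Pic_0(X)$ is exactly the torsion subgroup of $\Pic(X)$.

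For part (c), use the presentation $\Pic(X)=\ZZ^{n+1}/R$ from (a). Since $\QQ R=\ker(\psi\otimes\QQ)$ for the homomorphism $\psi\colon\ZZ^{n+1}\to\ZZ$ with $\psi(e_j)=m/p_j$, the saturation of $R$ is $K:=\ker\psi$, and therefore $\Pic_0(X)$, being the torsion subgroup, equals $K/R$. Now let $L:=\bigoplus_j\ZZ\,p_je_j$, so $\ZZ^{n+1}/L=\prod_j\ZZ/p_j\ZZ$ and $\psi$ descends to $\bar\psi\colon\prod_j\ZZ/p_j\ZZ\to\ZZ/m\ZZ$, $\bar e_j\mapsto m/p_j$. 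The crucial identity is $K\cap L=R$: an element $\sum_j c_jp_je_j\in L$ lies in $K$ iff $m\sum_j c_j=0$, i.e. $\sum_j c_j=0$, and then $\sum_j c_jp_je_j=\sum_j c_j(p_je_j-p_0e_0)\in R$, while the inclusion $R\subseteq K\cap L$ is clear. By the second isomorphism theorem $\Pic_0(X)=K/(K\cap L)\cong(K+L)/L$, and since $K+L=\psi^{-1}(m\ZZ)$ this is precisely $\ker\bar\psi$. Finally one must identify $\ker\bar\psi$ with $\prod_j\ZZ/p_j\ZZ\big/\langle(1,\ldots,1)\rangle$. This I would check one prime at a time: fixing a prime $\ell$ and writing $a_j=v_\ell(p_j)$ with $a_0=\max_j a_j$, a short direct computation shows that the $\ell$-primary parts of both $\ker\bar\psi$ and of the cokernel of the map $\ZZ/m\ZZ\to\prod_j\ZZ/p_j\ZZ$, $1\mapsto(1,\ldots,1)$, are isomorphic to $\bigoplus_{j\ge1}\ZZ/\ell^{a_j}\ZZ$. (Equivalently, one may observe that $\bar\psi$ and the map $1\mapsto(1,\ldots,1)$ are Pontryagin dual to one another, and a finite abelian group is isomorphic to its dual.)

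The main obstacle is this last identification in (c): the kernel description $\ker\bar\psi$ and the cokernel description $\prod_j\ZZ/p_j\ZZ/\langle(1,\ldots,1)\rangle$ are distinct subquotients of $\prod_j\ZZ/p_j\ZZ$ that merely happen to be abstractly isomorphic, so it genuinely requires either the primary-decomposition bookkeeping or the duality observation. Everything else reduces to routine divisor calculus on $\PP^1$ together with the behaviour of orders of vanishing at the orbifold points.
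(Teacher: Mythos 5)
Your proof is correct, but it takes a genuinely different route from the paper's, most visibly in part (c). For the presentation in (a) you compute the divisor class group directly on the coarse space, producing the explicit presentation $\Pic(X)\cong \ZZ^{n+1}/\langle p_ie_i-p_0e_0\rangle$ (including the verification that there are no further relations); the paper instead uses the restriction-to-isotropy homomorphism $\Pic(X)\to\prod_i\ZZ/p_i\ZZ$, whose kernel is $\Pic(\PP^1)\cong\ZZ$, to get the rank, and descent of $\cO_X(P_i)^{\otimes p_i}$ to $\PP^1$ to get the relations. For (c) the paper identifies the torsion line bundles with finite-order characters of the orbifold fundamental group (each torsion class is an $N$-torsor), so that $\Pic_0(X)=\Hom(\pi_1(X),\CC^\times)$ and the answer falls out of the standard presentation $\pi_1(X)=\langle\gamma_0,\dots,\gamma_n\mid\gamma_j^{p_j}=1,\ \gamma_0\cdots\gamma_n=1\rangle$; you instead stay inside integral linear algebra, computing $\Pic_0(X)=K/R\cong\ker\bar\psi$ and then matching this kernel with the cokernel $\prod_j\ZZ/p_j\ZZ/\langle(1,\dots,1)\rangle$. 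Your route is more elementary and self-contained (no torsors, no orbifold $\pi_1$), and it makes explicit two points the paper glosses over: that $\deg$ really surjects onto $\frac1m\ZZ$ (via $\gcd_j(m/p_j)=1$), and that the presentation in (a) is complete. The price is the final kernel-versus-cokernel identification, which you correctly flag as the one genuinely non-formal step; your Pontryagin duality argument (the map $1\mapsto(1,\dots,1)$ and $\bar\psi$ are dual to one another, and a finite abelian group is isomorphic to its dual) does close it cleanly, as does the primary-decomposition count. The paper's route is shorter and connects (c) conceptually to the Narasimhan--Seshadri-type correspondence it discusses in the remark that follows, which is why it is the more natural choice in context; but both proofs are sound.
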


\begin{proof}
Given $\cL\in \Pic(X)$, mapping $\cL$ to each isotropy $\iota(\cL,P)$, $P\in \{P_0, \ldots, P_n\}$  gives a group homomorphism 
\begin{equation}
\label{eqn:PicRestriction}
\Pic(X)\rightarrow \prod_{i=0}^n \ZZ/p_i\ZZ.
\end{equation}
The kernel of this isomorphism is the Picard group of the underlying Riemann surface, which is isomorphic to $\ZZ$ for an orbifold line. Therefore \eqref{eqn:PicRestriction} gives an exact sequence 
$$
0\rightarrow \ZZ \rightarrow \Pic(X)\rightarrow \prod_{i=0}^n  \ZZ/p_i\ZZ\rightarrow 0,
$$
which proves the claim about the rank of $\Pic(X)$. To get the more precise statement about the relations, let $x_i$ be the class in $\Pic(X)$ corresponding to the line bundle $\cO_{X}(P_i)$. Then each $\cO_{X}(P_i)^{\otimes p_i}$ descends to a line bundle over the underlying Riemann surface $\PP^1$ of degree one. But line bundles over $\PP^1$ of the same degree must necessarily be isomorphic, so part (a) follows. For part (b), note that $\cO_X(P_i)$ has degree $1/p_i$, thus the degree map is surjective, of kernel equal to $ \Pic_0(X)$ since $\frac{1}{m}\ZZ$ has characteristic zero. Now  $\Pic_0(X)$ consists of elements of finite order, so each $\cL \in  \Pic_0(X)$ can be given the structure of an $N$-torsor (i.e. a principal $\ZZ/N\ZZ$-bundle), for some positive integer $N$. These correspond to homomorphisms $\pi_1(X)\rightarrow \Aut(\ZZ/N\ZZ) = \ZZ/N\ZZ$, where $\pi_1(X)$ is the orbifold fundamental group of $X$. Since $X$ is of genus zero, this group has the presentation
\[
  \pi_1(X) = \langle \gamma_0,\ldots, \gamma_n \mid \gamma_j^{p_j} = 1,~ \gamma_0\gamma_1\cdots\gamma_n = 1 \rangle,
\]
thus it is generated by elements of finite order. In particular, one has
\[
\Pic_0(X) = \Hom(\pi_1(X),\CC^\times)\cong \frac{\prod_{j=0}^n \ZZ/p_j\ZZ}{\langle (1,1,\ldots,1)\rangle},
\]
which proves part (c). 
\end{proof}

\begin{rmk}
The identification $\Pic_0(X) \cong \Hom(\pi_1(X),\CC^\times)$ established above is well-known and part of a bigger picture. Narasimhan-Seshadri showed \cite{NarasimhanSeshadri} that stable holomorphic vector bundles of degree zero on a compact Riemann surface are in one-to-one correspondence with irreducible unitary representations of the fundamental group. The case of line bundles and unitary characters of the fundamental group, where the stability condition becomes empty, is even more classical. These results have been extended considerably in the direction of nonabelian Hodge theory, due initially to Hitchin, Donaldson and Simpson -- see \cite{Simpson1} and the references therein. In particular, the Narasimhan-Seshadri result has been generalized \cite{BiswasHogadi}, \cite{Simpson2} to the setting of compact orbifolds discussed in Proposition \ref{prop:picardGroupOfStackyLines}. For line bundles on a compact orbifold of genus zero, where all representations of rank one have finite image, one does not need the full strength of these results, as shown in the proof of Proposition \ref{prop:picardGroupOfStackyLines}. 
\end{rmk}

By a slight abuse of notation (which is standard for the case $X=\PP^1$), for any $x\in \Pic(X)$ we denote by $\cO(x)$ the  corresponding line bundle. For the {\em distinguished} element (of degree one)
$$
c\df p_0x_0 = \ldots = p_nx_n \in \Pic(X)
$$  
we call $\cO(c)$ the {\em distinguished} line bundle, and for the {\em dualizing} element 
$$
\omega\df (n-1)c - \sum_{i=0}^n x_i \in \Pic(X)
$$ 
(of degree $n-1 - \sum_{i=0}^n 1/p_i$) we call $\cO(\omega)$ the {\em dualizing} line bundle. This line bundle is isomorphic to the canonical bundle $\Omega^1_X$, the relative dualizing sheaf of the orbifold line $X$.

For any orbifold line $X$, the abelian group $\Pic(X)$ carries a partial ordering. In particular, since $\Pic(X)/\ZZ c \cong \prod_{i=0}^n \ZZ/p_i\ZZ$, we may write any element of $\Pic(X)$ uniquely as 
$$
x = \sum_{i=0}^n a_i(x) x_i + a(x)c, \quad a_i(x)\in \{0,\ldots,p_i-1\},\quad a(x)\in \ZZ.
$$
We then say that $x\leq y$ if $a_i(x)\leq a_i(y)$ for all $i=0,\ldots,n$ and $a(x)\leq a(y)$. Note that under this partial ordering the degree homomorphism $\Pic(X)\rightarrow \frac{1}{m}\ZZ$ is thus order-preserving.

Let $\cV$ be a vector bundle of rank $r$ over an orbifold line $X$. Restriction to each non-trivial orbifold point $P_i$ gives an $r$-dimensional representation
$$
\mu(\cV,P_i):\ZZ/p_i\ZZ \longrightarrow \GL_r(\CC)
$$
which is entirely determined by the linear transformation $\mu(\cV,P_i)(1)$. This is of finite order, hence diagonalizable, with eigenvalues of the form $e^{\frac{2\pi i}{p_i} \nu_{ij}}$, $\nu_{ij} \in \{0, \ldots, p_i-1\}$,  $j=1,\ldots, r$. 

\begin{dfn}
The integers $\nu_{ij} \in \{0, \ldots, p_i-1\}$, $j=1, \ldots, r$ are called the {\em isotropies} of $\cV$ at $P_i$. The integer
$$
\iota(\cV,P_i)\df \sum_{i=1}^r \nu_{ij} \in \ZZ_{\geq 0}
$$
is called the {\em isotropy trace} of $\cV$ at $P_i$.
\end{dfn}

Let now $\cL$ be a line bundle over an orbifold line $X$. Denote by
$$
\chi(X,\cL) \df \dim H^0(X,\cL) - \dim H^1(X,\cL)
$$
its Euler characteristic. Then the Riemann-Roch theorem for orbifold lines says that
$$
\chi(X,\cL) = \deg \cL + 1  - \left( \sum_{i=0}^n \frac{\iota(\cL,P_i)}{p_i}\right).
$$
As with algebraic curves (\cite{Atiyah}, Lemma 1 and \cite{GeigleLenzing}, Prop. 2.6), any vector bundle $\cV$ of rank $r$ over $X$ has a filtration by sub-bundles
\begin{equation}
\label{VBundleFiltration}
0 = \cV_0 \subseteq \cV_1 \subseteq \cdots \subseteq \cV_r = \cV
\end{equation}
such that each quotient $\cV_i/\cV_{i-1}\cong \cL_i$ is a line bundle. Among all filtrations as in \eqref{VBundleFiltration}, it is possible to choose a {\em maximal} filtration by requiring $\cV_1 = \cL_1$ to be a line sub-bundle of $\cV$ of maximal degree, then $\cL_2$ to be a line sub-bundle of $\cV/\cL_1$ of maximal degree, etc. By choosing any such filtration, the Riemann-Roch theorem extends to vector bundles over $X$  by the formula
$$
\chi(X,\cV) \df \dim H^0(X,\cV) - \dim  H^1(X,\cV) = \deg(V) + \rk(\cV) - \left( \sum_{i=0}^n \frac{\iota(\cV,P_i)}{p_i} \right)
$$
where $\deg(V)\df  \deg(\det(\cV))$. Moreover, the Serre duality theorem for orbifold lines gives a canonical isomorphism
$
H^1(X, \cV) \cong H^0(X, \cV^*(\omega))^*
$
and thus
$$
\dim H^1(X, \cV)  = \dim H^0(X, \cV^*(\omega)). 
$$

\section{Vector bundles over orbifold lines}
\label{indecomposables}

When $X= \PP^1$ the Grothendieck-Bierkhoff Theorem \cite{Groth} says that any vector bundle over $X$ decomposes into a sum of line bundles, i.e. an indecomposable vector bundle over $\PP^1$ is necessarily of rank one. The following Theorem \ref{thm:SplittingTheorem} shows that an analogous result holds for orbifold lines with at most two orbifold points.

\begin{thm}
\label{thm:SplittingTheorem}
Suppose that $X$ is an orbifold line with at most two orbifold points. Then any vector bundle $\cV$ of rank $r$ on $X$ decomposes as a direct sum of line bundles
$$
\cV \cong \bigoplus_{i=1}^r \cO(a_i),
$$
where $\deg a_r \geq \cdots \geq \deg a_1$. 
\end{thm}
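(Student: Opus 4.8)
The plan is to transcribe Grothendieck's cohomological proof of the splitting theorem for $\PP^1$, replacing the classical input ``line bundles of equal degree on $\PP^1$ are isomorphic'' by two facts about an orbifold line $X$ which follow at once from the identification of $\Pic(X)$ with the class group together with the genus zero hypothesis: for $y \in \Pic(X)$ one has $H^0(X,\cO(y)) \neq 0$ if and only if $y \geq 0$ in the partial order on $\Pic(X)$; and, by Serre duality, $H^1(X,\cO(y)) \neq 0$ if and only if $\omega - y \geq 0$. The argument then runs by induction on the rank $r$, with $r=1$ trivial.

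For the inductive step I would first normalize $\cV$. A filtration of $\cV$ by line subbundles shows that $H^0(X,\cV\otimes\cL)=0$ once $\deg\cL$ is sufficiently negative, while $H^0(X,\cV\otimes\cL)\neq 0$ for $\deg\cL$ large by Riemann--Roch; hence, after twisting by a suitable line bundle, I may assume $H^0(X,\cV)\neq 0$ but $H^0(X,\cV\otimes\cL)=0$ whenever $\deg\cL<0$. A nonzero section $s\in H^0(X,\cV)$ then defines an inclusion $\cO_X\hookrightarrow\cV$, and this is necessarily a subbundle inclusion: otherwise the saturation of its image would be a line subbundle isomorphic to $\cO_X(D)$ for an effective divisor $D>0$, and twisting the inclusion $\cO_X(D)\hookrightarrow\cV$ by $\cO_X(-D)$ would produce a nonzero section of $\cV\otimes\cO_X(-D)$, contradicting normalization since $\deg\cO_X(-D)<0$. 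Thus $\cV/\cO_X$ is a vector bundle of rank $r-1$, and by induction $\cV/\cO_X\cong\bigoplus_i\cO(a_i)$.

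It remains to split the extension $0\to\cO_X\to\cV\to\bigoplus_i\cO(a_i)\to 0$, i.e. to show $\Ext^1(\cO(a_i),\cO_X)=H^1(X,\cO(-a_i))=0$ for each $i$. Tensoring the defining sequence of $\cV/\cO_X$ by $\cO(-z)$, where $z$ ranges over the minimal positive elements of $\Pic(X)$ --- these are exactly $x_0,\dots,x_n$ and $c$ --- and combining normalization with the vanishing $H^1(X,\cO(-z))=0$ (which one checks holds because $\omega+z$ has strictly negative $c$-coordinate, hence is not $\geq 0$), one finds $H^0(X,(\cV/\cO_X)\otimes\cO(-z))=0$ for every such $z$. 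Therefore each summand class $a_i$ dominates no minimal positive element, and unwinding the partial order forces $a_i=0$ or $a(a_i)<0$, where $a_i=\sum_j a_j(a_i)x_j+a(a_i)c$ is the canonical expression. Computing the $c$-coordinate of $\omega+a_i$ in canonical form --- it works out to $(n-1)+a(a_i)-\#\{j:a_j(a_i)=0\}$ --- one sees it is negative in both cases, and this is the unique point where the hypothesis $n\leq 1$ enters, since for $n\geq 2$ that quantity need not be negative. Hence $H^1(X,\cO(-a_i))=H^0(X,\cO(\omega+a_i))^{*}=0$, the extension splits, $\cV\cong\cO_X\oplus\bigoplus_i\cO(a_i)$, and reindexing gives the asserted ordering $\deg a_r\geq\cdots\geq\deg a_1$.

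The main obstacle, I expect, is precisely this last control of $H^1(X,\cO(-a_i))$: a crude degree estimate does not suffice, because $\Pic(X)$ is only partially ordered and $\deg\omega$, although negative, may be arbitrarily close to zero. One genuinely needs the finer information that the classes $a_i$ avoid every minimal positive element of $\Pic(X)$, together with the elementary but slightly delicate bookkeeping with the partial order under the restriction $n\leq 1$ --- which is also where the hypothesis is essential, consistent with the failure of splitting (cf. Theorem \ref{thm:ExistenceOfRank2Indec}) once $X$ has three orbifold points. The remaining ingredients --- the normalization, the subbundle claim, and reassembling a global splitting from the vanishing of the individual extension groups --- are routine adaptations of the $\PP^1$ case.
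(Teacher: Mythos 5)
Your proof is correct and follows essentially the same route as the paper's: induction on the rank, normalization so that $\cO_X$ embeds as a line subbundle of maximal degree, and splitting of the resulting extension by proving $H^1(X,\cO(-a_i))=0$ via Serre duality together with the vanishing of $H^0$ of the quotient after a small negative twist. The only difference is organizational: the paper twists by the single element $x_0$ and uses the containment $\cO(a_i+\omega)\subseteq\cO(a_i-x_0)$ coming from $\omega=-x_0-x_1$, whereas you twist by all minimal positive elements of $\Pic(X)$ and finish by bookkeeping the $c$-coordinate of $\omega+a_i$ --- both versions isolate the hypothesis of at most two orbifold points at exactly the same step.
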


\begin{proof}

The proof proceeds by induction and is similar to that for $\PP^1$. Suppose the theorem is true for all vector bundles of rank $r-1$. Let $\cL_1 \subseteq \cV$ be the first step in a maximal filtration for $\cV$, and write $\cL_1 = \cO(-x^{\min})$, for some $x^{\min}\in \Pic(X)$. Twisting by $\cO(x^{\min})$ we obtain an exact sequence
$$
0\rightarrow \cO_{X} \rightarrow \cV(x^{\min}) \rightarrow \cF \rightarrow 0,
$$ 
for some vector bundle $\cF$ of rank $r-1$. By induction, we may write 
$$
\cF \cong \bigoplus_{i=1}^{r-1} \cO(b_i), \quad b_i \in \Pic(X).
$$
We now want to show that 
$$
\Ext^1(\cF, \cO) = H^1(X, \Hom(\cF,\cO)) = H^1(X, \bigoplus_{i=1}^{r-1} \cO(-b_i))
$$
vanishes, so that the above sequence is split and the theorem follows. Note that by Serre duality we have
$$
H^1(X, \bigoplus_{i=1}^{r-1} \cO(-b_i)) = H^0(X, \bigoplus_{i=1}^{r-1} \cO(b_i + \omega))
$$
where $\omega\in \Pic(X)$ is the dualizing element. Suppose first that there are two orbifold points $P_1,P_2$ of orders $(p_0,p_1)$. We have
$$
\Pic(X) = \{x_0,x_1 : p_0x_0 = p_1x_1\},
$$
so that $\omega = -x_0-x_1$. Consider then the twisted sequence
$$
0\rightarrow \cO(-x_0) \stackrel{s}\rightarrow \cV(x^{\min}-x_0) \rightarrow \cF(-x_0) \rightarrow 0,
$$ 
to which there is associated a long exact sequence in sheaf cohomology
$$
0\rightarrow H^0(X,\cO(-x_0)) \stackrel{s}\rightarrow H^0(X,\cV(x^{\min}-x_0)) \rightarrow H^0(X,\cF(-x_0)) \rightarrow H^1(X,\cO(-x_0))\rightarrow \ldots.
$$ 
Now
$$
\dim H^1(X,\cO(-x_0)) = \dim H^0(X,\cO(x_0 + \omega)) = \dim H^0(X,\cO(-x_0)) = 0,
$$
and $H^0(X,\cV(x^{\min}-x_0))=0$ by minimality of $x^{\min}$. Therefore
$$
H^0(X,\cF(-x_0)) = H^0(X, \bigoplus_{i=1}^{r-1} \cO(b_i - x_0)) = 0
$$
for all $i=1,\ldots, r-1$. But 
$$
\cO(b_i + \omega) = \cO(b_1 - x_0 - x_1) \subseteq \cO(b_i - x_0),
$$
and therefore $H^0(X,\cO(b_i + \omega))=0$ as well. If $X$ has only one orbifold point of order $p_0>1$, the Picard group is isomorphic to $\ZZ\,x_0$, $\omega = -c - x_0$ and the same argument applies. 
\end{proof}

\begin{rmk}
The decomposition in Theorem \ref{thm:SplittingTheorem} is unique whenever $\Pic_0(X) = 0$. In this case the degree map gives an isomorphism $\Pic(X)\cong \ZZ$ and the usual argument showing uniqueness of the decomposition for $\PP^1$ applies.  Note that by Prop. \ref{prop:picardGroupOfStackyLines} $\Pic_0(X)=0$  precisely when $X$ has at most one orbifold point, or two orbifold points of orders $p_0,p_1$ with $\gcd(p_0,p_1)=1$. 
\end{rmk}

When $X$ has three or more orbifold points, there exist vector bundles of rank two or higher that are indecomposable. According to \cite{Crawley-Boevey}, the enumeration of indecomposable vector bundles over an orbifold line $X$ of arbitrary signature can be done as follows. Suppose $X$ is an orbiline of signature $(p_0,\ldots, p_n)$ and consider the graph
\begin{equation}
\label{graph:rootSystem}
\begin{tikzcd}
   & 01 \arrow[r,-]& 02 \arrow[r,-] & \cdots\arrow[r,-] & 0,(p_0-1) \\
      & 11 \arrow[r,-]& 12 \arrow[r,-] & \cdots\arrow[r,-] & 1,(p_1-1) \\
 0 \arrow[ur,-]\arrow[uur,-] \arrow[dr,-]& \vdots & \vdots & \vdots & \vdots \\
 &n1 \arrow[r, -]& n2 \arrow[r, -] & \cdots\arrow[r, -] & n,(p_n-1) 
\end{tikzcd}
\end{equation}
which is entirely determined by the signature of $X$.
Let $\Sigma$ be the $\ZZ$-module freely generated by the vertices of this graph, so that an element in $\Sigma$ can be written  as a linear combination $a_0\,0 + \sum_{i=0}^n \sum_{j=1}^{p_i-1} a_{ij} \,ij$. Let $\cV$ be a vector bundle over $X$. Then at each orbifold point $P_i$ the vector space $V_{i} \df\cV|_{P_i}$ has a filtration  
$$V_i \supseteq V_{i1} \supseteq \cdots \supseteq V_{i,p_i-1} $$
given by the eigenspace filtration of the isotropy representation $\mu(\cV,P_i)(1)$. The {\em dimension vector} of $\cV$ is the vector
$$
\underline{\rm{dim}}(\cV) \df \rk(\cV)\,0 + \sum_{i=0}^n \sum_{j=1}^{p_i-1}\dim(V_{ij}) \,ij \in \Sigma. 
$$
Let $\mathfrak{g}$ be the Kac-Moody algebra \cite{Kac} uniquely determined by the root system \eqref{graph:rootSystem}.

\begin{thm}[Crawley-Boevey, \cite{Crawley-Boevey}]
\label{thm:CB}
For each $d \in \ZZ$, there is an indecomposable vector bundle $\cV$ over $X$ of degree $d$ if and only if $\underline{\rm{dim}}(\cV)$ is a strict root for $\mathfrak{g}$. There is a unique such indecomposable vector bundle if $\underline{\rm{dim}}(\cV)$ is a real root, and infinitely may if $\underline{\rm{dim}}(\cV)$ is imaginary. 
\end{thm}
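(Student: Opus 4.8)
This is the weighted projective line analogue of Kac's theorem on indecomposable quiver representations, and the approach would be to transport Kac's strategy to the category $\mathrm{coh}(X)$. By Theorem~\ref{thm:weightedProjectiveLineIsomorphism}, $X$ is a weighted projective line in the sense of Geigle--Lenzing, so $\mathrm{coh}(X)$ is a connected, hereditary, $\Hom$-finite, Krull--Schmidt abelian category with Serre duality implemented by the dualizing element $\omega$. The relevant combinatorics is organized by the guiding picture that a vector bundle on $X$ is the same thing as a parabolic bundle on $(\PP^1;P_0,\dots,P_n)$: splitting the underlying $\PP^1$-bundle (by Theorem~\ref{thm:SplittingTheorem} when $n\le 1$, or Grothendieck in general) turns $\cV$ into a configuration of flags at the $n+1$ marked points refining the eigenspace filtrations, which is exactly a representation datum for the star-shaped graph \eqref{graph:rootSystem} --- the central vertex $0$ recording the rank, the arms recording the flag dimensions. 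Thus $\underline{\dim}(\cV)$ is a point of the root lattice $\Sigma$ of the Kac--Moody algebra $\mathfrak g$, the residual degree datum being recorded separately by $d$.

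First one would prove the numerical half of the statement by Kac's counting method. Passing to a model of $X$ over a finite field $\FF_q$, let $A_\alpha(q)$ count the absolutely indecomposable coherent sheaves on $X_{\FF_q}$ with invariants $\alpha$. Using the Hall algebra of $\mathrm{coh}(X_{\FF_q})$ and a Hua-type generating-function identity --- the weighted projective line incarnations of the tools behind Kac's theorem --- one shows $A_\alpha\in\ZZ[q]$ and: $A_\alpha\ne 0$ exactly when $\alpha$ is a positive root of $\mathfrak g$; $A_\alpha\equiv 1$ exactly when $\alpha$ is a positive real root, which over $\CC$ gives a \emph{unique} indecomposable; and $\deg A_\alpha\ge 1$ exactly when $\alpha$ is imaginary, forcing a positive-dimensional family, hence infinitely many over $\CC$. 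Among all indecomposable coherent sheaves the vector bundles are singled out by positivity of the rank coordinate $\alpha_0=\rk\cV$, together with the monotonicity $\rk\cV\ge\dim V_{i1}\ge\cdots\ge\dim V_{i,p_i-1}$ automatic from the eigenspace filtration; a root meeting these constraints is what is meant by a \emph{strict} root, the other positive roots being carried by torsion sheaves supported on a single tube. Finally, twisting by the distinguished bundle $\cO(c)$, which has trivial isotropy by Proposition~\ref{prop:picardGroupOfStackyLines} and so preserves $\underline{\dim}$ while raising the degree by $1$, moves an indecomposable of a fixed dimension vector through all admissible degrees, which gives the ``for each $d$'' clause.

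To make the real-root case concrete --- and, as a byproduct, to recover Theorem~\ref{thm:SplittingTheorem} --- one would run the reflection-functor reduction. The simple reflection at an arm vertex $ij$ is realized by an honest reflection functor on $\mathrm{coh}(X)$ swapping adjacent steps of the eigenspace filtration at $P_i$, and these generate the affine Weyl group $\widetilde A_{p_i-1}$ acting within the tube over $P_i$; the reflection at the central vertex $0$ is realized by an auto-equivalence of $D^b(\mathrm{coh}\,X)$, namely a tubular mutation, or twist at an exceptional line bundle. A positive real root is carried by the Weyl group either to a simple root on an arm --- a simple torsion sheaf, classified outright --- or to a dimension vector of rank $\le 1$, that is, to a line bundle; and a line bundle is pinned down by its isotropies together with its degree, by the injectivity of $\Pic(X)\hookrightarrow\tfrac{1}{m}\ZZ\times\prod_i\ZZ/p_i\ZZ$ read off from Proposition~\ref{prop:picardGroupOfStackyLines}. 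Since the functors are equivalences, indecomposability and uniqueness transport back, producing the unique indecomposable bundle of each real root and each admissible degree. When $n\le 1$ the graph \eqref{graph:rootSystem} is of type $A$, hence has no imaginary roots and only positive roots with coordinates in $\{0,1\}$; so every indecomposable bundle has rank $1$, and Krull--Schmidt recovers the splitting asserted in Theorem~\ref{thm:SplittingTheorem}.

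The genuine obstacle is the first step: proving that $A_\alpha$ is polynomial in $q$ and establishing the exact dictionary between its vanishing and its degree and the root datum of $\mathfrak g$. In the quiver case this is precisely Kac's theorem together with the Kac positivity conjectures; carrying it over requires the Hall algebra of a weighted projective line and care in matching the graph \eqref{graph:rootSystem}, with its distinguished ``rank'' vertex, to the correct Kac--Moody algebra, as well as the right formulation of ``strict root'' isolating bundles from torsion sheaves and from shifts. Everything else --- the reduction by reflection functors, the line-bundle endpoint, the degree bookkeeping via $\cO(c)$, and the specialization to $n\le 1$ --- is then formal, given the hereditary structure of $\mathrm{coh}(X)$ and Proposition~\ref{prop:picardGroupOfStackyLines}.
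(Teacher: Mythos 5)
The paper offers no proof of this statement: Theorem~\ref{thm:CB} is quoted from Crawley-Boevey and used as a black box (the authors say explicitly that they ``recall a general result of Crawley-Boevey''), so there is no internal argument to compare yours against. Judged on its own terms, your outline is a fair reconstruction of the strategy of the cited proof --- the dictionary between bundles on $X$ and parabolic bundles on $(\PP^1;P_0,\dots,P_n)$ producing data for the star-shaped graph \eqref{graph:rootSystem}, the count of absolutely indecomposable sheaves over a finite field via Hall-algebra and Hua-type identities in the style of Kac's theorem, and the reflection-functor reduction of real roots to line bundles and simple torsion sheaves. But it is a sketch, not a proof: as you concede in your final paragraph, essentially all of the content lies in the first step (polynomiality of $A_\alpha$ in $q$ and the exact correspondence between its vanishing and degree and the root datum of $\mathfrak{g}$), and that step is asserted rather than established. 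Within a paper that simply cites the result, this would be acceptable as an expository remark but could not replace the citation.

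One concrete slip in the degree bookkeeping: twisting by $\cO(c)$ raises the degree of a rank-$r$ bundle by $r\cdot\deg(c)=r$, not by $1$, since $\deg(\cV\otimes\cO(c))=\deg\cV+\rk(\cV)$. So for $r\geq 2$ the twist by $\cO(c)$ does not sweep out all admissible degrees for a fixed dimension vector; realizing every degree compatible with a given strict root requires either the finer twists by the $\cO(x_i)$ (which permute the isotropy data and hence move you to a different, Weyl-equivalent dimension vector) or a direct construction. This is exactly the kind of interaction between the degree and the dimension vector that Crawley-Boevey's formulation of ``type'' is designed to handle, and your sketch elides it.
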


When the root system \eqref{graph:rootSystem} is a Dynkin diagram, the algebra $\mathfrak{g}$ is a simple Lie algebra and the classification of indecomposable vector bundles was already given in \cite{GeigleLenzing}, 5.4.1. In this case there are finitely many strict positive roots and thus bounds can be obtained on the rank of indecomposable vector bundles, by looking at the $0$-component of a maximal root. The only signatures for which $\mathfrak{g}$ is a simple Lie algebra are $(p_0)$, $(p_0,p_1)$, $(2,2,n),(2,3,3),(2,3,4)$ and $(2,3,5)$, corresponding to the Lie algebras $A_{p_0}$, $A_{p_0+p_1 -1}, D_{n+2}, E_6, E_7$ and $E_8$, respectively. We have already seen (Theorem \ref{thm:SplittingTheorem}) directly that for signatures $(p_0)$ and $(p_0,p_1)$ the maximal rank of an indecomposable vector bundle is one. We treat the case $(2,2,n)$ in detail below, where the bound on the maximal rank is 2. For $E_6, E_7$ and $E_8$ the maximal ranks for indecomposable vector bundles are $3$, $4$ and $6$, respectively (\cite{Lenzing}).

\begin{rmk}
Define the {\em virtual genus} of an orbifold curve to be 
$$
g_v(X)\df \frac{1}{2}\deg(\omega) + 1,
$$
in analogy with the usual relation between the genus and the degree of the canonical bundle for Riemann surfaces. The orbifold curves $X$ with $0 < g_v(X) < 1$ are precisely those whose root system \eqref{graph:rootSystem} is a Dynkin diagram, i.e. for which the rank of an indecomposable vector bundle is bounded. These orbifold curves seem to fit in between the two cases of Riemann surfaces of genus zero (i.e. $\PP^1$), where the maximal rank of indecomposable vector bundles is one, and of genus one (i.e. elliptic curves ), where there are indecomposable vector bundles of arbitrary rank (\cite{Atiyah}). 
\end{rmk}

\subsection{The case of signature $(2,2,n)$}

If $X$ has signature $(2,2,n)$ the Lie algebra $\mathfrak{g}$ of Theorem \ref{thm:CB} is the simple Lie algebra $D_{n+2}$ and the highest $0$-component of a strict root is 2. Thus for $X$ of signature $(2,2,n)$ there are indecomposable vector bundles of rank two, but of no higher rank. Moreover for a fixed degree $d$ there is a unique such rank two indecomposable vector bundle, by Thm. \ref{thm:CB}. We are able to describe this vector bundle explicitly:

\begin{thm}
\label{thm:ExistenceOfRank2Indec}
Let $X$ be the orbifold line of signature $(2,2,n)$. There is a unique (up to isomorphism) indecomposable vector bundle of rank two and degree -1/3 over $X$. This vector bundle is the unique non-trivial extension
$$
0 \rightarrow \Omega^1_X \rightarrow \cW \rightarrow \cO_X \rightarrow 0.
$$
\end{thm}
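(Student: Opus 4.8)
The plan is to produce the rank two indecomposable bundle as an extension of $\cO_X$ by the dualizing bundle $\Omega^1_X \cong \cO(\omega)$, and then to verify that such a non-trivial extension (i) exists, (ii) is unique, (iii) has the right degree, and (iv) is genuinely indecomposable. For existence and uniqueness of the extension, the key computation is $\Ext^1(\cO_X, \Omega^1_X) = H^1(X, \Omega^1_X)$, which by Serre duality equals $H^0(X, \cO_X)^* \cong \CC$; hence the extension group is one-dimensional and there is, up to isomorphism of extensions, a unique non-split $\cW$. The degree is immediate: for signature $(2,2,n)$ one has $n=2$ orbifold points so $\deg \omega = (2-1) - (\tfrac12 + \tfrac12 + \tfrac1n) = -\tfrac1n$... wait — here the ``$n$'' in the signature $(2,2,n)$ is the third order, and the paper's general formula uses $n+1$ points indexed $0,\dots,n$, so with three points ($P_0,P_1,P_2$ of orders $2,2,n$) we get $\deg \omega = (3-1) - (\tfrac12+\tfrac12+\tfrac1n) - 1 = -\tfrac1n$. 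The statement specializes to $n=3$, giving $\deg \Omega^1_X = -1/3$, and since $\deg \cO_X = 0$, additivity of degree in the short exact sequence yields $\deg \cW = -1/3$, as claimed.

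The heart of the argument is \textbf{indecomposability} of the non-split $\cW$. I would argue by contradiction: if $\cW \cong \cO(a) \oplus \cO(b)$ for line bundles $\cO(a), \cO(b)$ on $X$ (any rank two decomposition must be into line bundles, since the summands have rank one), then $\deg a + \deg b = -1/3$ and there is a nonzero map $\cW \to \cO_X$ (the given quotient) and a nonzero map $\Omega^1_X \to \cW$ (the given sub). Composing, one sees that among $\{a,b\}$ one summand, say $\cO(a)$, must admit a nonzero map to $\cO_X$, i.e. $\deg a \le 0$ and in fact $-a \ge 0$ in the partial ordering on $\Pic(X)$ when the map is nonzero; and $\Omega^1_X$ must map nonzero into one of the summands. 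The strategy is to play these two constraints against the splitting of the original sequence: a decomposition $\cW = \cO(a) \oplus \cO(b)$ would give a splitting of $0 \to \Omega^1_X \to \cW \to \cO_X \to 0$ once one checks that the inclusion $\Omega^1_X \hookrightarrow \cW$ must land (up to the automorphisms of $\cW$) in a single summand — this is where one uses that $\Hom(\Omega^1_X, \cO(a)) \oplus \Hom(\Omega^1_X, \cO(b))$ and $\Hom(\cO(a),\cO_X)\oplus\Hom(\cO(b),\cO_X)$ are each at most one-dimensional for degree reasons, and that $\deg \Omega^1_X = \deg \cO_X - 1/3 < 0$ forces $\Omega^1_X$ to be a \emph{proper} sub-line-bundle. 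Concretely: if $\Omega^1_X$ maps isomorphically onto a sub-bundle of $\cO(a)$, then since both have the same rank one and a line bundle has no proper sub-line-bundle of equal degree unless equal, we would need $\deg a = -1/3$ and $\cO(a) \cong \Omega^1_X$, whence $\cO(b) \cong \cO_X$ and the sequence splits — contradiction. If instead $\Omega^1_X$ maps with nonzero component into \emph{both} summands, I would compose with the projection $\cW \to \cO_X$ and analyze; the composite $\Omega^1_X \to \cO_X$ is zero (it is the zero map in the short exact sequence), which forces the image of $\Omega^1_X$ in $\cW$ to be contained in the kernel of $\cW \to \cO_X$, which is exactly $\Omega^1_X$ again, returning us to the previous case.

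The main obstacle I anticipate is the bookkeeping in the indecomposability step: one must carefully rule out that $\cW$ decomposes as $\cO(a)\oplus\cO(b)$ with the sub $\Omega^1_X$ embedded ``diagonally'', using only $\Hom$ and $\Ext$ dimension counts and the partial order on $\Pic(X)$ from the preamble, rather than appealing to Crawley-Boevey's Theorem~\ref{thm:CB} (which the theorem statement explicitly wants to avoid). An alternative, cleaner route to indecomposability is to compute $\End(\cW)$ directly from the short exact sequence: applying $\Hom(\cW, -)$ and $\Hom(-,\cW)$ to $0 \to \Omega^1_X \to \cW \to \cO_X \to 0$, and using $\Hom(\cO_X, \Omega^1_X) = H^0(X,\Omega^1_X) = 0$ (as $\deg \Omega^1_X < 0$) together with $\Hom(\Omega^1_X,\cO_X) = H^0(X, \cO_X(-\omega))$, one obtains a filtration of $\End(\cW)$ showing that the connecting map is injective precisely because the extension class is nonzero, and hence $\dim \End(\cW) = 1$, so $\cW$ has no nontrivial idempotents and is indecomposable. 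I would present the argument via this $\End$-computation, as it most directly connects ``non-trivial extension class'' with ``indecomposable''. Finally, uniqueness \emph{of the bundle} (not just the extension) among rank two degree $-1/3$ indecomposables follows from Theorem~\ref{thm:CB}, since for signature $(2,2,n)$ the relevant dimension vector is a real root of $D_{n+2}$; but for the self-contained claim it suffices that any two non-split extensions of $\cO_X$ by $\Omega^1_X$ are isomorphic as bundles, which follows from $\dim \Ext^1(\cO_X,\Omega^1_X) = 1$ and scaling the extension class.
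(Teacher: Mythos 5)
Your preferred route---showing $\dim \End(\cW)=1$ and concluding that $\cW$ has no nontrivial idempotents---is exactly the paper's proof: the authors tensor the defining sequence with $\cW^*$, show $H^0(X,\cW^*\otimes\Omega^1_X)=H^1(X,\cW)^*=0$ because the connecting map $\delta$ is an isomorphism, deduce $H^0(X,\End(\cW))\hookrightarrow H^0(X,\cW^*)$, and then compute $\dim H^0(X,\cW^*)=1$ from the dualized sequence. Your identification of $\Ext^1(\cO_X,\Omega^1_X)\cong H^1(X,\Omega^1_X)\cong\CC$ and the degree bookkeeping also match (and you are right that $\deg\cW=\deg\omega=-1/n$; the ``$-1/3$'' in the statement is this quantity for $n=3$, consistent with the paper later invoking the theorem for signature $(2,2,2)$ where the degree is $-1/2$).

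Two points where your sketch is too quick. First, the filtration of $\End(\cW)$ only yields $\dim\End(\cW)\le 1+\dim H^0(X,T_X)$ where $T_X=(\Omega^1_X)^*=\cO(-\omega)$, so you must actually prove $H^0(X,T_X)=0$; you record the identification $\Hom(\Omega^1_X,\cO_X)=H^0(X,\cO(-\omega))$ but never kill it. This does \emph{not} follow from a degree argument, since $\deg T_X = 1/n>0$; the paper gets it from the orbifold Riemann--Roch formula with isotropy corrections, $\chi(T_X)=1/n+1-1/2-1/2-1/n=0$, together with $H^1(X,T_X)=0$. This is the one genuinely orbifold-specific input (it is exactly where the two order-$2$ points enter), and omitting it leaves the bound $\dim\End(\cW)\le 1$ unjustified. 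Second, your first ``direct'' argument is circular at the diagonal-embedding step: the image of $\Omega^1_X$ being contained in $\ker(\cW\to\cO_X)$ is true by definition and does not place it inside a single summand of a putative splitting $\cO(a)\oplus\cO(b)$, so that case analysis does not close; you correctly discard it in favor of the $\End$-computation, which is the right call.
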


\begin{proof}
The extensions of the form
\begin{equation}
\label{equation:indecExtension}
0 \rightarrow \Omega^1_X \rightarrow \cW \rightarrow \cO_X \rightarrow 0
\end{equation}
are classified by the cohomology group
$$\mathrm{Ext}^1(\cO_X, \Omega^1_X) \cong H^1(X, \Hom(\cO_X, \Omega^1_X)) \cong H^1(X,\Omega^1_X).$$
Explicitly, recall that the element in $H^1(X, \Hom(\cO_X, \Omega^1_X))$ classifying $\cW$ is given by the connecting homomorphism $\delta$ in the long exact sequence of cohomology corresponding to the short exact sequence obtained by applying the functor $\Hom(\cO_X, -)$ to the exact sequence defining $\cW$. Since $\Hom(\cO_X, \cV) \cong \cV$ for any vector bundle $\cV$, the connecting homomorphism $\delta$ lies in the space
$$
\delta \in \Hom(H^0(X,\cO_X),H^1(X,\Omega^1_X)). 
$$
This vector space is 1-dimensional, thus there is a unique (up to isomorphism) extension $\cW$ giving $\delta\neq 0$ (i.e $\delta$ is an isomorphism). We claim that such $\cW$ is indecomposable. Indeed, tensoring by $\cW^*$ we get an exact sequence 
$$
0 \rightarrow \cW^*\otimes\Omega^1_X \rightarrow \cW^*\otimes\cW \rightarrow \cW^* \rightarrow 0,
$$
and thus a long exact sequence in cohomology 
$$
0 \rightarrow H^0(X,\cW^*\otimes\Omega^1_X) \rightarrow H^0(X,\cW^*\otimes\cW) \rightarrow H^0(X,\cW^*)  \rightarrow \ldots ,
$$
Now $\dim  H^0(X,\cW^*\otimes\Omega^1_X)  = \dim H^1(X,\cW) = 0$ since the long exact sequence coming from \eqref{equation:indecExtension} gives
$$
\ldots \rightarrow H^0(X,\cO_X) \stackrel{\delta}\rightarrow H^1(X,\Omega^1_X) \rightarrow H^1(X,\cW) \rightarrow H^1(X,\cO_X) = 0 \rightarrow 0
$$
and $\delta$ is an isomorphism by hypothesis. So we have an injection
$$
H^0(X,\cW^*\otimes\cW) = H^0(X,\End(\cW)) \hookrightarrow H^0(X,\cW^*).
$$
On the other hand, dualizing $\eqref{equation:indecExtension}$ and taking cohomology we get a long exact sequence
$$
0\rightarrow H^0(X,\cO_X) \rightarrow H^0(X, \cW^*) \rightarrow H^0(X,T_X) \rightarrow \ldots 
$$
where $T_X = (\Omega^1_X)^*$ is the tangent bundle of $X$. Now 
$$
\deg T_X = -1 + 1/2 + 1/2 + 1/n = 1/n >0
$$
thus $H^1(X,T_X) = 0$ and 
$$
\chi(T_X) = H^0(X,T_X) = 1/n + 1 - 1/2 - 1/2 - 1/n = 0.
$$

Therefore $\dim H^0(X, \cW^*) = \dim H^0(X,\cO_X) = 1$. This means that 
$$
\dim  H^0(X,\End(\cW)) \leq 1.
$$
If $\cW = \cO(a_1)\oplus \cO(a_2)$ were decomposable, then 
$$
\End(\cW) \cong \cO_X \oplus \cO_X \oplus \cO(a_2 - a_1) \oplus \cO(a_1 - a_2)
$$
and $\dim H^0(X,\End(\cW)) \geq H^0(X,\cO_X \oplus \cO_X ) = 2$, a contradiction. Thus $\cW$ is indecomposable.
\end{proof}

\begin{rmk}
It is easy to show that all the other indecomposable vector bundles on $X$ can be obtained from $\cW$ by twisting by a line bundle. 
\end{rmk}

\section{Coherent sheaves over orbifold lines}
\label{coherentSheaves}

Let $X$ be an orbifold line of signature $(p_0, \ldots, p_n)$, and for ease of notation let $W = \Pic(X)$, a finitely generated abelian group of rank one (Prop. \ref{prop:picardGroupOfStackyLines}). Let $\CC[W]$ be the group algebra of $W$ with coefficients in $\CC$, and let 
$$
G(W)\df \Spec(\CC[W])
$$
be the affine group scheme corresponding to it. The closed points of this group scheme can be identified with tuples $(t_0, \ldots, t_n) \in (\CC^{\times})^{n+1}$ such that $t_0^{p_0} = \cdots = t_n^{p_n}$. There is an action of $G(W)$ on $\mathbf{A}^{n+1}$ given on closed points by 
$$
(t_0, \ldots, t_n)(X_0, \ldots, X_n) = (t_0X_0, \ldots, t_nX_n).
$$

\begin{dfn}
The quotient stack $\PP(W) = \left[ G(W)\backslash \left(\mathbf{A}^{n+1} - \{0\}\right)\right]$ is called $W$-{\em weighted projective space}.
\end{dfn}

For each $i=0,\ldots,n$ consider the line bundle $\cO(x_i) = \cO_X(P_i)$, where $x_i\in W$ are the chosen  generators of $W$  as in Prop. \ref{prop:picardGroupOfStackyLines}. By the Riemann-Roch theorem, 
$$
\chi(\cO(x_i)) = \dim H^0(X,\cO(x_i)) - \dim H^1(X,\cO(x_i)) = \deg \cO(x_i) + 1 - \frac{1}{p_i} = 1,
$$
and thus we may choose a non-zero global section $s_i$ of $\cO(x_i)$ which vanishes only at the orbifold point $P_i$ and nowhere else. 

\begin{thm}
The tuple $(s_0, \ldots, s_n)$ gives a well-defined morphism 
$$
s: X \longrightarrow \PP(W)\\
$$
of $X$ into $W$-weighted projective space.
\end{thm}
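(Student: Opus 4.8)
The plan is to build $s$ via the defining universal property of the quotient stack $\PP(W) = [G(W)\backslash(\mathbf{A}^{n+1}-\{0\})]$: a morphism $X \to \PP(W)$ is the same datum as a $G(W)$-torsor $\pi\colon T \to X$ together with a $G(W)$-equivariant morphism $T \to \mathbf{A}^{n+1}-\{0\}$. I would take $T$ to be the tautological torsor attached to the line bundles $\cO(x_i)$, and the equivariant map to be the one determined by the sections $s_0,\dots,s_n$.

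In outline: first, since $G(W) = \Spec\CC[W]$ is diagonalizable with character group $W$ and $X$ is proper and connected over $\CC$, so that $\cO_X^\times(X)=\CC^\times$ is divisible, $G(W)$-torsors on $X$ are classified by $\Hom(W,\Pic X)$; I take $T$ to be the torsor corresponding to the identity $W\to\Pic X=W$, which concretely is $\underline{\Spec}_X\cB$ for a $W$-graded $\cO_X$-algebra $\cB=\bigoplus_{w\in W}\cB_w$ with each $\cB_w$ a line bundle of class $w$, each multiplication $\cB_w\otimes\cB_{w'}\to\cB_{w+w'}$ an isomorphism, $\cB_0=\cO_X$, and $\cB_{x_i}\cong\cO(x_i)$. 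Second, a $G(W)$-equivariant morphism $T\to\mathbf{A}^{n+1}=\Spec\CC[X_0,\dots,X_n]$ with $X_i$ of weight $x_i$ is the same as a morphism of $W$-graded $\cO_X$-algebras $\CC[X_0,\dots,X_n]\otimes_\CC\cO_X\to\cB$, hence the same as a tuple $(\sigma_0,\dots,\sigma_n)$ with $\sigma_i\in\Gamma(X,\cB_{x_i})=H^0(X,\cO(x_i))$; I set $\sigma_i=s_i$. Third, this morphism factors through $\mathbf{A}^{n+1}-\{0\}$ exactly when $s_0,\dots,s_n$ have no common zero, which holds because $s_i$ vanishes only at $P_i$ and the points $P_0,\dots,P_n$ are pairwise distinct. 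Combining the three pieces produces $s\colon X\to\PP(W)$.

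The real content sits in the first step, and more precisely in the fact that the gluing cocycle of this morphism takes values in $G(W)$ and not just in the ambient torus $(\mathbb{G}_m)^{n+1}$, i.e. in the subgroup defined by $t_0^{p_0}=\cdots=t_n^{p_n}$. Spelled out over a cover $\{U_\alpha\}$ trivializing the $\cO(x_i)$ --- which for the orbifold $X$ must be read in the \'etale topology, with each $P_i$ contained in a single chart --- this is the assertion that the transition functions $g_i^{(\alpha\beta)}$ of $\cO(x_i)$ satisfy $\bigl(g_i^{(\alpha\beta)}\bigr)^{p_i}=\bigl(g_j^{(\alpha\beta)}\bigr)^{p_j}$, which is precisely the relation $p_0x_0=\cdots=p_nx_n=c$ in $\Pic X$ from Proposition \ref{prop:picardGroupOfStackyLines}(a): all the $\cO(x_i)^{\otimes p_i}$ are isomorphic to the distinguished bundle $\cO(c)$, and one normalizes the local generators of $\cO(x_i)$ so that their $p_i$-th powers agree with a fixed local generator of $\cO(c)$ --- possible \'etale-locally since units have local $p_i$-th roots. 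With the torsor fixed, the equivariance of the map built from $(s_0,\dots,s_n)$, the no-common-zero verification, and the identification of the two descriptions of equivariant maps are all formal. I expect the only real delicacy to be the bookkeeping with orbifold charts and with matching the isotropy of $\cO(x_i)$ at $P_i$ against the weight $x_i$ that governs the $G(W)$-action on the $i$-th coordinate.
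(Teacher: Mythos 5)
Your proposal is correct, but it takes a genuinely different route from the paper's. The paper argues by hand: since the $s_i$ have no common zero, the loci $X_i=\{s_i\neq 0\}$ form an open cover of $X$, over each $X_i$ one writes down the map $x\mapsto[s_0/s_i(x),\ldots,1,\ldots,s_n/s_i(x)]$ into the standard chart of $\PP(W)$, checks equivariance under the stabilizers of the orbifold points, and glues by descent. You instead invoke the universal property of the quotient stack $[G(W)\backslash(\mathbf{A}^{n+1}-\{0\})]$, reducing the statement to producing a $G(W)$-torsor --- the one classified by $\id\in\Hom(W,\Pic X)$, using divisibility of $\CC^\times$ to kill the obstruction --- together with the $G(W)$-equivariant map determined by the $s_i$. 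Your route is heavier on machinery but makes structurally transparent two things the paper leaves implicit: why the relation $p_0x_0=\cdots=p_nx_n$ in $\Pic X$ is exactly what forces the gluing data into $G(W)$ rather than the ambient torus, and why equivariance at the orbifold points is automatic once one works with orbifold line bundles throughout. The paper's route is shorter and entirely explicit in coordinates, which is what the subsequent identification of $X$ with $C(W)$ in Theorem \ref{thm:weightedProjectiveLineIsomorphism} actually uses. The one spot where your write-up is breezy is the existence of the multiplicative structure on $\cB=\bigoplus_{w}\cB_w$ (equivalently a coherent system of isomorphisms $\cO(x_i)^{\otimes p_i}\cong\cO(c)$): the chart-by-chart normalization by local $p_i$-th roots of units does not by itself obviously produce a globally consistent choice, so you should lean on the vanishing of $\Ext^1_{\ZZ}(W,\CC^\times)$ from the torsor classification you quote rather than on the local normalization. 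With that caveat the argument goes through.
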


\begin{proof}
First note that the $s_i$'s have no common zero since each $s_i$ vanishes precisely at the orbifold point $P_i$ and nowhere else. Therefore the collection $X_i = \{ x\in X: s_i(x) \neq 0\}$ is an open cover of $X$. Over each $X_i$, we may define a map 
$$
x\mapsto [s_0/s_i(x), \ldots, 1, \ldots, s_n/s_i(x)]
$$
to the standard open set $U_i = \{ X_i\neq 0\}$ of $\PP(W)$. This map is equivariant with respect to the action of the stabilizers of the orbifold points, hence it is well-defined. By descent, the local maps defined over each $X_i$ glue to give a global map $s: X\rightarrow \PP(W)$. 
\end{proof}

Next we show that $s$ is a closed immersion and we derive the equations defining $X$ as a subvariety in $\PP(W)$. Note that for each $i=0,\ldots, n$ the function $s_i^{p_i}$ is a section of $\cO(c)$. Riemann-Roch gives $\chi(\cO(c)) = 2$, but $\dim H^1(X,\cO(c))=0$ since the cohomology of $\cO(c)$ is the same as that of the usual twisting bundle $\cO(1)$ over the underlying Riemann surface $\PP^1$. Therefore the vector space of global section of $\cO(c)$ is 2-dimensional, spanned, say, by $s_0^{p_0}, s_1^{p_1}$ (these cannot be proportional since they have different zeroes $P_0\neq P_1$). It follows that for $i=2,\ldots, n$ we must have relations of the form
$$
s_i^{p_i} = \lambda_is_0^{p_0} -\mu_is_1^{p_1},\quad i=2,\ldots,n,
$$
for some $\lambda_i,\mu_i\in \CC$. By rescaling the $s_i$'s if necessary, we may assume that $\mu_i = 1$ for all $i=2,\ldots, n$. Let now $\CC[X_0, \ldots, X_n]$ be the coordinate ring of $\mathbf{A}^{n+1}$. The polynomials 
\begin{equation}
\label{equation:definingEquations}
X_i^{p_i} + X_1^{p_1} - \lambda_iX_0^{p_0}, \quad i=2,\ldots,n,
\end{equation}
cut out an affine subscheme $V$ of $\mathbf{A}^{n+1}$ together with a $G(W)$-action compatible with that on $\mathbf{A}^{n+1}$. We let
$$
C(W) \df \left[ G(W)\backslash \left(V-\{0\}\right)\right] \subseteq \PP(W)
$$
be the corresponding closed quotient sub-stack.

\begin{thm}
\label{thm:weightedProjectiveLineIsomorphism}
The morphism $s$ is a closed immersion inducing an isomorphism 
$$
s: X \stackrel{\cong}\longrightarrow C(W)\subseteq \PP(W)
$$
\end{thm}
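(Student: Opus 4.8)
The plan is to verify that $s\colon X \to C(W)$ is a closed immersion by checking it locally on the standard affine charts of $\PP(W)$, and then deduce that its image is exactly $C(W)$ by a degree/dimension count. Since both $X$ and $\PP(W)$ are stacks, and since $s$ is already known to be a well-defined morphism by the previous theorem, it suffices to work on the coarse level away from the orbifold points and then handle the orbifold points separately using the equivariance built into the construction of $s$.

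First I would observe that the relations $s_i^{p_i} = \lambda_i s_0^{p_0} - s_1^{p_1}$ (with $\mu_i = 1$ after rescaling) show immediately that $s$ factors through the closed substack $C(W) \subseteq \PP(W)$ cut out by the polynomials in \eqref{equation:definingEquations}: on each chart $X_i$ the coordinates $s_j/s_i$ satisfy exactly the dehomogenized defining equations. So the real content is that $s\colon X \to C(W)$ is an isomorphism. For this I would exhibit the inverse on each standard open $U_i \cap C(W)$. On $U_0$, say, the equations \eqref{equation:definingEquations} with $X_0 = 1$ express $X_i^{p_i}$ as a polynomial in $X_1$ alone for each $i \geq 2$; together with the $G(W)$-action, the chart $U_0 \cap C(W)$ is the quotient stack of $\{X_i^{p_i} + X_1^{p_1} = \lambda_i : i = 2,\ldots,n\}$ by the residual stabilizer, which is precisely a disc around $P_0$ with its $\ZZ/p_0\ZZ$ action. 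The map sending $x \mapsto s_1/s_0(x)$ identifies this with the corresponding chart of $X$: it is injective because a point of $X$ away from $P_1$ is determined by the value of the rational function $s_1^{p_1}/s_0^{p_0}$ together with the compatible choices of $p_i$-th roots recorded by the $s_i$, and it is surjective because every admissible tuple of roots is realized by the local structure of $\cO(x_i)$ near $P_0$. The fact that $s_0^{p_0}, s_1^{p_1}$ span $H^0(X,\cO(c))$ and that this cohomology agrees with that of $\cO(1)$ on $\PP^1$ is what guarantees there are no further relations and no missing points: the map $[s_0^{p_0} : s_1^{p_1}]$ is already the standard identification of the coarse space of $X$ with $\PP^1$.

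The cleanest way to package this is probably to note that $C(W)$ has the same coarse moduli space as $X$ — namely $\PP^1$, via $[X_0^{p_0} : X_1^{p_1}]$, using the equations \eqref{equation:definingEquations} to eliminate $X_2,\ldots,X_n$ — and that $s$ induces the identity on coarse spaces; then one checks that $s$ induces an isomorphism on stabilizer groups and on the local structure at each of the finitely many stacky points $P_0,\ldots,P_n$. At $P_i$ the stabilizer of $C(W)$ is $\mu_{p_i}$ acting on the $i$-th coordinate, and the equivariance of the chart map $x \mapsto [s_0/s_i : \cdots : 1 : \cdots : s_n/s_i]$ under the stabilizer action (already invoked in the proof that $s$ is well-defined) shows $s$ matches these actions. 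A morphism of smooth orbifold curves that is an isomorphism on coarse spaces and on all stabilizer groups is an isomorphism, so we are done.

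The main obstacle I expect is the bookkeeping at the orbifold points: one must be careful that the section $s_i$, which vanishes to "order $1/p_i$" at $P_i$, really does provide a uniformizer for the chart $U_i$ after passing to the $p_i$-fold cover, and that the nonvanishing of $s_j$ ($j \neq i$) at $P_i$ makes the remaining coordinates into honest units there. Getting the normalization constants straight — why $\mu_i$ can be scaled to $1$ while the $\lambda_i$ are genuine moduli (they are, after all, the cross-ratios of the branch points $P_0,P_1,P_2,\ldots$ on $\PP^1$) — and confirming that $V - \{0\}$ is smooth of the expected dimension along the locus hit by $s$, is the part that requires genuine attention rather than formal nonsense.
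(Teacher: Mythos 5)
Your proposal is correct and follows essentially the same route as the paper: the paper likewise reduces to the map of underlying Riemann surfaces $x \mapsto [s_0^{p_0}(x) : s_1^{p_1}(x)]$, observes that this is an isomorphism of $\PP^1$'s because $s_0^{p_0}, s_1^{p_1}$ generate $\cO(1)$, and then concludes by noting that $s$ preserves the orbifold structure at the stacky points. Your version simply spells out in more detail the chart-level and stabilizer-level checks that the paper's (rather terse) proof leaves implicit.
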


\begin{proof}
The complex manifold underlying $\PP(W)$ is the usual projective space $\PP^n$, the map $\PP(W)\rightarrow \PP^n$ being given in coordinates by
$$
[X_0, \ldots, X_n] \longmapsto [X_0^{p_0}, \ldots, X_n^{p_n}].
$$
If we restrict this morphism to $C(W)$ we obtain the equations of a projective line in $\PP^n$. The map $C(W) \rightarrow \overline{C(W)}\cong\PP^1$ to the underlying Riemann surface of $C(W)$ is given in coordinates by 
$$
[z_0, \ldots, z_n] \longmapsto [z_0^{p_0},z_1^{p_1}].
$$
The map of underlying Riemann surfaces induced by $s$ is thus given by
\begin{align*}
x &\longmapsto [s_0^{p_0}(x),s_1^{p_1}(x)].
\end{align*}
Since $s_0^{p_0}, s_1^{p_1}$ generate the line bundle $\cO(1)$ over $\PP^1$, this map is clearly an isomorphism. Moreover $s$ is non-trivial at the orbifold points and it preserves the orbifold structure therefore it must be an isomorphism at the level of orbifolds as well. 
\end{proof}

\begin{rmk}
Once a choice of embedding $s:X\hookrightarrow \PP(W)$ as above is made, $X$ can be viewed as a {\em weighted projective line} \cite{GeigleLenzing}. The (weighted) projective coordinates of the orbifold points of order $p_0, p_1$ and $p_i$, $i = 2,\ldots, n$, of $X$ are precisely $\infty, 0$ and $\lambda_i$, $i = 2, \ldots, n$, respectively. 
\end{rmk}

Consider now the polynomial ring $\CC[X_0, \ldots, X_n]$ together with a $W$-grading given by $\deg \CC = 0$ and $\deg X_i = x_i$. Let $I(X)$ be the ideal of $\CC[X_0, \ldots, X_n]$ generated by the polynomials in \eqref{equation:definingEquations}. The quotient
$$
S(X)\df \CC[X_0, \ldots, X_n]/I(X) \cong \CC[z_0, \ldots, z_n]
$$
is again a $W$-graded ring generated by $n+1$ elements $z_i$ with $\deg z_i = x_i$. Let $\mathrm{grMod}(S(X))$ be the category of finitely generated graded modules over $S(X)$ (the morphisms being degree-preserving $S(X)$-module homomorphisms) and let $\mathrm{grMod}_0(S(X))$ be the full subcategory of modules that are finite dimensional vector spaces over $\CC$.

\begin{prop}
\label{prop:equivalenceCoherentSheaves}
The category $\mathrm{coh}(X)$ of coherent sheaves on $X$ is equivalent to the quotient category $\mathrm{grMod}(S(X))/\mathrm{grMod}_0(S(X))$.
\end{prop}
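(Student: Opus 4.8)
This proposition is the weighted projective line analogue of Serre's theorem \cite{SerreFAC} identifying coherent sheaves on a projective scheme with finitely generated graded modules over its homogeneous coordinate ring modulo torsion, and in this generality it is due to Geigle--Lenzing \cite{GeigleLenzing}; the plan is to deduce it from the description of $X$ as a quotient stack furnished by Theorem \ref{thm:weightedProjectiveLineIsomorphism}. By that theorem we identify $X$ with $C(W) = [G(W)\backslash(V-\{0\})]$, where $V = \Spec S(X) \subseteq \mathbf{A}^{n+1}$ is the affine cone cut out by the equations \eqref{equation:definingEquations} and $G(W) = \Spec\CC[W]$ is the diagonalizable group scheme with character group $W$; the $W$-grading on $S(X)$ is precisely the datum of the $G(W)$-action on $V$. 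Quasi-coherent sheaves on a quotient stack $[G(W)\backslash Y]$ are by definition $G(W)$-equivariant quasi-coherent sheaves on $Y$, and on the affine scheme $V$ the latter are exactly $W$-graded $S(X)$-modules, the grading arising as the weight decomposition of the coaction. Restricting along the $G(W)$-stable open immersion $V-\{0\}\hookrightarrow V$ and descending to $X$ therefore yields an exact functor
\[
\widetilde{(-)}\colon \mathrm{grMod}(S(X)) \longrightarrow \mathrm{coh}(X), \qquad M \longmapsto \widetilde M,
\]
which is well defined on coherent sheaves since $S(X)$ is a finitely generated, hence Noetherian, $\CC$-algebra.

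Next I would identify the Serre subcategory annihilated by $\widetilde{(-)}$. A finitely generated graded module $M$ satisfies $\widetilde M = 0$ precisely when it is set-theoretically supported on the vertex $\{0\}$ of the cone, equivalently when each $z_i$ acts locally nilpotently on $M$, equivalently --- as $M$ is finitely generated --- when $M$ is annihilated by a power of the irrelevant ideal $(z_0,\dots,z_n)$; since $S(X)_0 = \CC$ and $S(X)$ is Noetherian, this last condition is the same as $\dim_\CC M < \infty$. Hence $\ker\widetilde{(-)} = \mathrm{grMod}_0(S(X))$, this is a Serre subcategory of $\mathrm{grMod}(S(X))$, and $\widetilde{(-)}$ factors through an exact functor
\[
\overline{(-)}\colon \mathrm{grMod}(S(X))/\mathrm{grMod}_0(S(X)) \longrightarrow \mathrm{coh}(X).
\]

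It remains to prove that $\overline{(-)}$ is an equivalence. For full faithfulness one combines the standard description of morphisms in a Serre quotient category with the comparison isomorphism $\varinjlim_{d}\Hom_{\mathrm{grMod}(S(X))}(M_{\geq d}, N) \cong \Hom_{\mathrm{coh}(X)}(\widetilde M,\widetilde N)$, proved exactly as in \cite{SerreFAC}. For essential surjectivity, given $\cF\in\mathrm{coh}(X)$ one forms the Serre-type module $\GM_*(\cF) = \bigoplus_{x\in W}H^0(X,\cF(x))$ over $S(X) = \GM_*(\cO_X)$, verifies that the canonical map $\widetilde{\GM_*(\cF)}\to\cF$ is an isomorphism (a local statement on $X$, amounting to the fact that the sections $s_i$ have no common zero), and --- the one genuinely nontrivial point --- verifies that $\GM_*(\cF)$ is finitely generated over $S(X)$. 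The cleanest route to this finiteness is to push forward along the finite coarse-space morphism $\pi\colon X\to\overline X\cong\PP^1$ of Theorem \ref{thm:weightedProjectiveLineIsomorphism}: grouping the summands of $\GM_*(\cF)$ by the finitely many cosets of $\ZZ c$ in $W$, the coset of $x$ contributes the module $\bigoplus_{k\in\ZZ}H^0(\PP^1,\pi_*(\cF(x))\otimes\cO_{\PP^1}(k))$ over $\CC[u,v] = \bigoplus_{k}H^0(\PP^1,\cO_{\PP^1}(k))$, which is finitely generated by Serre's theorem on $\PP^1$; since $\CC[u,v]$ embeds in $S(X)$ as the $\ZZ c$-graded part, $\GM_*(\cF)$ is finitely generated over $S(X)$. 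I expect this finite-generation step to be the main obstacle; the rest is formal manipulation of the equivariant dictionary and of the Serre quotient. Alternatively, one may simply invoke \cite{GeigleLenzing}, where the equivalence is established for all weighted projective lines.
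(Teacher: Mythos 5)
Your proposal is correct and follows essentially the same route as the paper: identify $X$ with the quotient stack $[G(W)\backslash(V-\{0\})]$ via Theorem \ref{thm:weightedProjectiveLineIsomorphism}, translate $G(W)$-equivariant sheaves on the affine cone $V$ into $W$-graded $S(X)$-modules, and identify the sheaves killed by restriction to $V-\{0\}$ with the finite-dimensional modules supported at the irrelevant ideal. The only difference is one of detail: where the paper simply cites the standard fact that coherent sheaves on $V-\{0\}$ form the Serre quotient of those on $V$ by the subcategory supported at $0$, you verify the resulting equivalence by hand (full faithfulness via the Serre comparison and essential surjectivity via $\GM_*$ with finite generation checked on the coarse space $\PP^1$), which is a legitimate elaboration rather than a different argument.
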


\begin{proof}
By Theorem \ref{thm:weightedProjectiveLineIsomorphism} $X$ is isomorphic to the quotient stack $C(W)=\left[ G(W)\backslash \left(V-\{0\}\right)\right]$, where $V$ is defined by \eqref{equation:definingEquations}. Thus  the category of coherent sheaves on $X$ is equivalent to that of $G(W)$-equivariant coherent sheaves on $V-\{0\}$. Now the category of coherent sheaves on  $V-\{0\}$ is the quotient category of the category of coherent sheaves on $V$ modulo the coherent sheaves with support at $0$, and the same is true for $G(W)$-equivariant sheaves. In turn, the category of $G(W)$-equivariant sheaves on $V$ is equivalent to $\mathrm{grMod}(S(X))$ via the sheafification functor
$$
\mathrm{grMod}(S(X)) \longrightarrow \mathrm{coh}([G(W)\backslash V]), \quad M \mapsto \widetilde{M},
$$
and the subcategory of those coherent sheaves with support on 0 is equivalent to $\mathrm{grMod}_0(S(X))$, since the closed point 0 corresponds to the ideal $(z_0, \ldots, z_n) \subseteq S(X)$. 
\end{proof}

The sheafification functor $
\mathrm{grMod}(S(X)) \longrightarrow \mathrm{coh}(X), M \mapsto \widetilde{M},
$ admits a right adjoint $\GM_*: \mathrm{coh}(X) \rightarrow \mathrm{grMod}(S(X))$ defined by
$$
\GM_*(\cF)\df \bigoplus_{x \in W} H^0(X, \cF(x)).
$$

\begin{rmk}
Traditionally (e.g. \cite{SerreFAC}, \cite{GeigleLenzing}) the functor $\GM_*$ is denoted by $\Gamma$. However, in what follows we would like to reserve the symbol $\Gamma$ to indicate a Fuchsian group. The letters $\GM$ are chosen to indicate that the $S(X)$-module obtained by applying the functor is of geometric origin.
\end{rmk}

We say that an $S(X)$-module $M$ is {\em maximal Cohen-Macaulay} if $M$ is (finitely generated) free over the sub-algebra $\CC[X_0^{p_0}, X_1^{p_1}]\hookrightarrow S(X)$.

\begin{thm}[Geigle-Lenzing, \cite{GeigleLenzing}, Thm. 5.1]
\label{t:geigleLenzing}
Suppose $\cV$ is a vector bundle on $X$. Then $\GM_*(\cV)$ is a maximal Cohen-Macaulay $S(X)$-module. 
\end{thm}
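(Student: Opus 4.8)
The plan is to reduce the maximal Cohen-Macaulay statement to a cohomological computation, using the projective embedding $X \hookrightarrow \PP(W)$ from Theorem \ref{thm:weightedProjectiveLineIsomorphism}. Write $R = \CC[X_0^{p_0}, X_1^{p_1}] \subseteq S(X)$; after the identification $S(X) \cong \CC[z_0,\ldots,z_n]$ this is a polynomial ring in two variables (the pair $z_0^{p_0}, z_1^{p_1}$ spans $H^0(X,\cO(c))$ and is algebraically independent since the associated map to $\PP^1$ on underlying Riemann surfaces is an isomorphism). Since $R$ is regular of Krull dimension two, by the Auslander--Buchsbaum formula a finitely generated $R$-module $M$ is free (equivalently maximal Cohen-Macaulay) if and only if $M$ has depth $2$, i.e.\ $\mathrm{Ext}^i_R(R/\frakm, M) = 0$ for $i = 0,1$, where $\frakm = (z_0^{p_0}, z_1^{p_1})$. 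So the goal becomes: $\GM_*(\cV) = \bigoplus_{x \in W} H^0(X,\cV(x))$ has depth $2$ over $R$, for $\cV$ a vector bundle on $X$.

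First I would establish that $\GM_*(\cV)$ is finitely generated over $S(X)$ — hence over $R$, once one knows $S(X)$ is module-finite over $R$, which follows because $z_i^{p_i} = \lambda_i z_0^{p_0} - z_1^{p_1}$ and the $z_i$ satisfy the monic relations \eqref{equation:definingEquations}, so $S(X)$ is generated as an $R$-module by the monomials $z_0^{a_0}\cdots z_n^{a_n}$ with $0 \le a_i < p_i$. Finite generation of $\GM_*(\cV)$ is the standard Serre-type vanishing argument: for $\deg x$ sufficiently large, $H^1(X, \cV(x)) = 0$ (the degree map $\Pic(X) \to \tfrac1m\ZZ$ is order-preserving and, as in Serre duality recalled in Section \ref{orbifoldLines}, $H^1(X,\cV(x)) \cong H^0(X, \cV^*(\omega - x))^* = 0$ once $\deg x > \deg\omega + \deg\cV^*$ roughly speaking), and then multiplication by a suitable section identifies high-degree pieces, bounding the generators. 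Next, the depth computation: multiplication by $s_0^{p_0}$ on $\GM_*(\cV)$ is injective because $s_0^{p_0}$ is a nonzero section of a line bundle on the integral orbifold $X$, so a nonzero local section of $\cV(x)$ cannot be killed by it — this gives depth $\ge 1$. For depth $\ge 2$ one must show that multiplication by (the image of) $s_1^{p_1}$ is injective on $\GM_*(\cV)/(s_0^{p_0})\GM_*(\cV)$. Here I would use the Koszul-type exact sequence on $X$ coming from the two sections $s_0^{p_0}, s_1^{p_1}$ of $\cO(c)$, whose common zero locus is empty: for each $x$, tensoring with $\cV(x)$ and taking the long exact sequence in cohomology, the obstruction to injectivity lands in an $H^1$ group, and summing over $x \in W$ the relevant $H^1$ contributions assemble into a module that vanishes — or more precisely, the cokernel of $s_0^{p_0}$ on global sections, compared with global sections on the zero scheme of $s_0^{p_0}$ (a stacky point, where $\cV$ restricts to a finite-length sheaf), is controlled so that $s_1^{p_1}$ acts without zero-divisors.

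The main obstacle I anticipate is the depth-$2$ step, i.e.\ correctly organizing the cohomology of the restriction of $\cV$ to the (weighted) vanishing loci of $s_0^{p_0}$ and $s_1^{p_1}$ and checking that no torsion is introduced in the quotient $\GM_*(\cV)/(s_0^{p_0})$. Concretely, one wants that for every $x \in W$ the map $H^0(X,\cV(x))/s_0^{p_0} H^0(X,\cV(x-c)) \to H^0(Z_0, \cV(x)|_{Z_0})$ is injective, where $Z_0 = \{s_0^{p_0} = 0\}$ is the orbifold point $P_0$ with its stabilizer of order $p_0$, and then that $s_1^{p_1}$ restricts to a nonzerodivisor there — which it does because $s_1$ does not vanish at $P_0$. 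Assembling these pointwise statements into the single module-theoretic assertion $\mathrm{depth}_R \GM_*(\cV) = 2$, and handling the bookkeeping of the $W$-grading throughout, is the technical heart; everything else is the standard FAC-style machinery adapted to the weighted projective line, exactly as in \cite{GeigleLenzing}, \cite{SerreFAC}.
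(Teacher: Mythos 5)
The paper does not actually prove this statement: it is quoted as Theorem 5.1 of Geigle--Lenzing and used as a black box, so there is no internal argument to compare yours against. Judged on its own, your reconstruction is the standard and correct one, and it matches the spirit of the original. The reduction is sound: with the paper's definition of maximal Cohen--Macaulay as ``finitely generated free over $R=\CC[X_0^{p_0},X_1^{p_1}]$,'' finite generation plus $\mathrm{depth}_R=2$ gives projective dimension zero by Auslander--Buchsbaum, hence freeness (splitting the $W$-grading into its finitely many cosets modulo $\ZZ c$ reduces graded Nakayama to the usual $\ZZ$-graded, bounded-below case). Your depth-$2$ step is right and you in fact resolve the ``main obstacle'' you flag: the divisor sequence $0\to\cV(x-c)\xrightarrow{s_0^{p_0}}\cV(x)\to\cV(x)|_{Z_0}\to 0$ automatically embeds $H^0(X,\cV(x))/s_0^{p_0}H^0(X,\cV(x-c))$ into $H^0(Z_0,\cV(x)|_{Z_0})$, and $s_1^{p_1}$ restricts to a unit on the Artinian stack $Z_0$, so $(s_0^{p_0},s_1^{p_1})$ is a regular sequence on $\GM_*(\cV)$; injectivity of $s_0^{p_0}$ itself uses only that $\cV$ is locally free, hence torsion-free. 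The one place your sketch is stated imprecisely is finite generation: a single section does not ``identify high-degree pieces'' (the dimensions of $H^0(X,\cV(x))$ grow), so you should instead invoke the Koszul sequence $0\to\cO(-2c)\to\cO(-c)^{\oplus 2}\to\cO\to 0$ built from the base-point-free pair $(s_0^{p_0},s_1^{p_1})$, tensor with $\cV(x)$, and use $H^1(X,\cV(x-2c))=0$ for $\deg x\gg 0$ to conclude that $H^0(X,\cV(x))=s_0^{p_0}H^0(X,\cV(x-c))+s_1^{p_1}H^0(X,\cV(x-c))$ in large degrees; combined with the vanishing of $H^0$ in sufficiently negative degrees and finite-dimensionality of each graded piece, this yields finite generation over $R$. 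With that repair the argument is complete and gives an honest, self-contained proof of a result the paper only cites.
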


When $\cF = \cO(x)$ is a line bundle on $X$, the $S(X)$-module $\GM_*(\cO(x))$ is $S(X)[-x]$, the module obtained from $S(X)$ by shifting the $W$-grading by $x$. We say that a (finitely generated) $S(X)$-module is {\em free} if it is a direct sum of shifts of the form $S(X)[-x]$. In particular, Theorem \ref{thm:SplittingTheorem} gives: 

\begin{cor}
\label{cor:moduleSplitting}
Suppose $X$ has at most two orbifold points and let $\cV$ be a vector bundle of rank $r$ over $X$. Then the $S(X)$-module $\GM_*(\cV)$ is free of rank $r$. 
\end{cor}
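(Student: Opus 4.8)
The plan is to deduce the statement immediately from the splitting principle of Theorem~\ref{thm:SplittingTheorem} together with the additivity of the functor $\GM_*$. First I would invoke Theorem~\ref{thm:SplittingTheorem}: since $X$ has at most two orbifold points, the rank $r$ vector bundle $\cV$ decomposes as a direct sum of line bundles $\cV \cong \bigoplus_{i=1}^r \cO(a_i)$ for suitable elements $a_i \in W = \Pic(X)$.

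Next I would observe that $\GM_*$ commutes with finite direct sums. This is transparent from the definition $\GM_*(\cF) = \bigoplus_{x\in W} H^0(X,\cF(x))$: twisting by a fixed line bundle is additive on coherent sheaves, the global sections functor $H^0$ carries a finite direct sum to the direct sum of the $H^0$'s, and the resulting decomposition is compatible with the $S(X)$-action, since multiplication by a generator $z_j$ (of degree $x_j$) acts componentwise on a direct sum. Hence $\GM_*(\cV) \cong \bigoplus_{i=1}^r \GM_*(\cO(a_i))$ as $W$-graded $S(X)$-modules.

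Finally I would use the identification $\GM_*(\cO(x)) \cong S(X)[-x]$ recorded just before the statement, which gives $\GM_*(\cV) \cong \bigoplus_{i=1}^r S(X)[-a_i]$. By the definition of a free $S(X)$-module adopted above, this exhibits $\GM_*(\cV)$ as free of rank $r$, completing the argument.

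As for the main obstacle: there is essentially none at this stage, since all of the substantive content lies in Theorem~\ref{thm:SplittingTheorem} and in the computation $\GM_*(\cO(x)) \cong S(X)[-x]$, both of which I am entitled to assume. The one point deserving a moment's care is verifying that the natural isomorphism of $W$-graded vector spaces $\GM_*\bigl(\bigoplus_i \cO(a_i)\bigr) \cong \bigoplus_i \GM_*(\cO(a_i))$ is in fact an isomorphism of $S(X)$-modules; this holds because the module structure on $\GM_*$ of a direct sum of sheaves is induced coordinatewise from the module structures on the summands, so no compatibility is lost in passing to the decomposition.
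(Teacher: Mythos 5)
Your proposal is correct and matches the paper's own argument: the corollary is stated there as an immediate consequence of Theorem~\ref{thm:SplittingTheorem} together with the identification $\GM_*(\cO(x)) \cong S(X)[-x]$ and the (implicit) additivity of $\GM_*$, exactly as you spell out. (The paper also notes an alternative route via Theorem~\ref{t:geigleLenzing}, since $S(X)$ is a polynomial ring in two variables when $X$ has at most two orbifold points, but your argument is the intended one.)
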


\begin{rmk}
Alternatively, observe that if $X$ has at most two orbifold points then $S(X)$ is a polynomial ring in two variables, and any maximal Cohen-Macaulay $S(X)$-module over a polynomial ring is free. 
\end{rmk}

Conversely:

\begin{prop}
\label{p:notfree}
Suppose $\cV$ is an indecomposable vector bundle over the orbifold line $X$. Then $\GM_*(\cV)$ is not free. 
\end{prop}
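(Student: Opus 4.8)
The plan is to prove that if $\GM_*(\cV)$ is free, then $\cV$ is isomorphic to a direct sum of line bundles. For an indecomposable $\cV$ this is impossible once $\rk\cV\ge 2$, which is the case of interest here (for a line bundle $\cO(x)$ one has $\GM_*(\cO(x)) = S(X)[-x]$, which is free by definition, so no contradiction is to be expected in rank one).

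First I would collect the two ingredients. By Proposition \ref{prop:equivalenceCoherentSheaves}, sheafification $M\mapsto\widetilde M$ descends to an equivalence
\[
  \mathrm{grMod}(S(X))\big/\mathrm{grMod}_0(S(X))\;\xrightarrow{\;\sim\;}\;\mathrm{coh}(X),
\]
and $\GM_*$ is right adjoint to sheafification and induces a quasi-inverse to this equivalence; concretely, the adjunction counit $\widetilde{\GM_*(\cF)}\to\cF$ is an isomorphism for every coherent sheaf $\cF$ on $X$. Moreover sheafification is exact, hence commutes with finite direct sums; and combining the counit isomorphism with the identity $\GM_*(\cO(x)) = S(X)[-x]$ recorded just before Corollary \ref{cor:moduleSplitting} gives $\widetilde{S(X)[-x]}\cong\cO(x)$ for every $x\in W$.

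Now suppose, toward a contradiction, that $\GM_*(\cV)$ is free, say $\GM_*(\cV)\cong M$ with $M = \bigoplus_{i=1}^{r}S(X)[-a_i]$ for some $a_1,\dots,a_r\in W$, where necessarily $r = \rk\cV$. Applying $\widetilde{\;\cdot\;}$ and chaining the facts just listed,
\[
  \cV\;\cong\;\widetilde{\GM_*(\cV)}\;\cong\;\widetilde M\;\cong\;\bigoplus\nolimits_{i=1}^{r}\widetilde{S(X)[-a_i]}\;\cong\;\bigoplus\nolimits_{i=1}^{r}\cO(a_i),
\]
so $\cV$ splits as a direct sum of $r\ge 2$ line bundles, contradicting its indecomposability.

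The one step that is not purely formal is the counit isomorphism $\widetilde{\GM_*(\cF)}\cong\cF$, i.e.\ that reconstructing a coherent sheaf from the global sections of all of its twists recovers the sheaf. This is the Serre-type theorem for the weighted projective line $X\cong C(W)$ of Theorem \ref{thm:weightedProjectiveLineIsomorphism}, and it is already packaged into Proposition \ref{prop:equivalenceCoherentSheaves}; I expect it to be the only genuine obstacle, and it is available to us. The remaining manipulations are formalities about adjoint and additive functors.
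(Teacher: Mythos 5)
Your proof is correct and follows essentially the same route as the paper's: sheafify the putative free decomposition, use the adjunction/counit isomorphism $\widetilde{\GM_*(\cV)}\cong\cV$ coming from Proposition \ref{prop:equivalenceCoherentSheaves}, and conclude that $\cV$ would split into line bundles, contradicting indecomposability. Your explicit caveat that the contradiction only bites when $\rk\cV\ge 2$ is a point the paper's statement and proof silently elide (a line bundle is indecomposable yet has free $\GM_*$), but the argument is otherwise identical.
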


\begin{proof}
Suppose $\GM_*(\cV) \cong \oplus_{i=1}^r S[-a_i]$ for some $a_i\in W = \Pic(X)$. The sheafification functor gives $\widetilde{\GM_*(\cV)} \cong \oplus_{i=1}^r \cO(a_i)$. But $\GM_*$ is right-adjoint to $\cF$, so there is a canonical isomorphism $\widetilde{\GM_*(\cV)}\cong \cV$, contradicting the fact that $\cV$ is indecomposable. 
\end{proof}

For example, let $X$ be the orbifold line of signature $(2,2,2)$ and let $\cV$ be the rank two indecomposable vector bundle $\cW$ of Theorem \ref{thm:ExistenceOfRank2Indec}. Then $\GM_*(\cW)$ is not free over $S(X)$.

\section{Vector valued modular forms}
\label{s:vvmfs}

Let $\Gamma\subseteq\PSL_2(\RR)$ denote a Fuchsian group of the first kind, and assume that $\Gamma$ is of finite covolume. Let $\tau_0, \ldots, \tau_n$ denote the elliptic points of $\Gamma$, each of order $p_0, \ldots, p_n$, respectively. Let $s_1,\ldots, s_m$ denote the cusps of $\Gamma$. The group $\Gamma$ acts on $\mathfrak{h}$ by linear fractional transformations and the orbifold quotient $\left[ \Gamma\backslash\mathfrak{h} \right]$ is a one-dimensional complex orbifold which is a Riemann surface away from the elliptic points and it has a $\ZZ/p_i\ZZ$-orbifold structure around each elliptic point $\tau_i$. 

Let $\sM_{\Gamma}$ denote the compact {\em modular orbifold} associated to $\Gamma$, the orbifold curve obtained by compactifiying the quotient $\left[\Gamma\backslash\mathfrak{h} \right]$ by adding the cusps of $\Gamma$ (if any). This is obtained from $\left[\Gamma\backslash\mathfrak{h} \right]$ by glueing disks around each cusp of $\Gamma$, as follows. For $s$ a cusp, let $\alpha \in \PSL_2(\RR)$ be such that $\alpha(s) = \infty$. The stabilizer $\Gamma_s$ of the cusp $s$ satisfies
$$
\alpha \Gamma_s \alpha^{-1} = \left\{ \twomat 1h01^m: m\in \ZZ \right\}\cong \ZZ, 
$$
for some real number $h > 0$, called the {\em width} of $s$. There is a canonical injection $\iota_s: \left[ \Gamma_s \backslash \mathfrak{h} \right]\hookrightarrow \left[\Gamma\backslash\mathfrak{h} \right]$ and $\left[ \Gamma_s \backslash \mathfrak{h} \right]$ maps holomorphically to the punctured unit disk $\DD^{\times}$ via the map
$$
\tau \longmapsto e^{2\pi i \alpha(\tau)/h}.
$$
The punctured disk admits a canonical compactification $\iota: \DD^{\times} \hookrightarrow \DD$ by the unit disk, so the compactification along $s$ is obtained via the following diagram 

\begin{equation}
\label{diagram:compactification}
\xymatrixcolsep{5pc}\xymatrix{
&\left[ \langle \Gamma_s \rangle\backslash\frakh\right]\ar[ld]_{\iota_s}\ar@{<->}[r]^{\tau \mapsto e^{2\pi i \alpha(\tau)/h} } & \DD^\times \ar[rd]^{\iota} &\\
\left[\Gamma\backslash\frakh\right] = \sM_{\Gamma} &&& \DD ,}
\end{equation}
which identifies the cusp $s$ with the origin $0 \in \DD$.

The group $\Gamma$ is identified with the orbifold fundamental group of $[\Gamma \backslash \uhp]$. The following result is well-known.
\begin{prop}
\label{p:fuchsianpresentation}
Assume that the compact curve $\sM_{\Gamma}$ is of genus zero. Then the Fuchsian group $\Gamma$ admits a presentation of the form
\[
  \Gamma =\langle e_0,\ldots, e_n, \sigma_1\ldots, \sigma_m\mid e_j^{p_j}=1 \textrm{ for all }j,~ e_0\cdots e_n\sigma_1\cdots \sigma_m = 1 \rangle.
\]
The $e_j$ may be taken to be generators of the elliptic isotropy subgroups, and the $\sigma_j$ may be taken to be generators of the cuspidal isotropy subgroups. In particular, if $\Gamma$ has a unique cusp, then $\Gamma$ is the free product of its elliptic isotropy subgroups.
\end{prop}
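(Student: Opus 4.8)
The plan is to deduce the presentation from the identification $\Gamma \cong \pi_1^{\mathrm{orb}}\!\left([\Gamma\backslash\uhp]\right)$ already recorded in the excerpt, and then to compute the right-hand side by elementary two-dimensional topology. Since $\uhp$ is contractible, the quotient map $\uhp \to [\Gamma\backslash\uhp]$ is a \emph{universal} orbifold covering with deck group $\Gamma$, so the named isomorphism holds and, moreover, under it the stabilizer in $\Gamma$ of a point of $\uhp$ lying above $\tau_j$ (resp. above the cusp $s_l$) corresponds to the subgroup generated by a small positively oriented loop about the corresponding special point downstairs. Thus it suffices to compute $\pi_1^{\mathrm{orb}}$ of the genus-zero $2$-orbifold whose underlying surface is $\PP^1$ with the $m$ cusps removed and which carries cone points of orders $p_0,\dots,p_n$ at the images of $\tau_0,\dots,\tau_n$, keeping track of which generators are the small loops.

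First I would excise a small open disk around each of the $n+1$ cone points and around each of the $m$ cusps. What remains is a sphere with $n+1+m$ open disks removed, a planar surface homotopy equivalent to a wedge of $n+m$ circles; its fundamental group is free and admits the redundant presentation with generators $\gamma_0,\dots,\gamma_n,\sigma_1,\dots,\sigma_m$ (one boundary loop for the disk removed at each $\tau_j$, one for each cusp $s_l$) and the single relation $\gamma_0\cdots\gamma_n\,\sigma_1\cdots\sigma_m = 1$ recording that the product of all boundary loops bounds in $\PP^1$. Then I would glue the disks back and apply the Seifert--van Kampen theorem in its orbifold form: re-attaching the punctured disk at a cusp is a homotopy equivalence onto its boundary circle and introduces nothing new, while re-attaching the orbifold disk $[\DD/(\ZZ/p_j\ZZ)]$ at $\tau_j$, whose orbifold fundamental group is cyclic of order $p_j$ on its boundary loop, imposes exactly $\gamma_j^{p_j}=1$. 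Writing $e_j=\gamma_j$ yields
\[
  \Gamma \;=\;\big\langle\, e_0,\dots,e_n,\sigma_1,\dots,\sigma_m \;\big|\; e_j^{p_j}=1\ \forall j,\ e_0\cdots e_n\sigma_1\cdots\sigma_m=1 \,\big\rangle .
\]
By construction $\gamma_j$ is freely homotopic to the boundary of a disk neighbourhood of the cone point, so after conjugating the whole generating tuple (equivalently, choosing base point and lifts compatibly) one may take $e_j$ to generate the cyclic order-$p_j$ elliptic stabilizer of a lift of $\tau_j$ and $\sigma_l$ to generate the parabolic stabilizer $\Gamma_{s_l}$ of $s_l$; this proves the two refinements. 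When $m=1$ the single long relation expresses $\sigma_1$ in terms of the $e_j$, so $\Gamma=\langle e_0,\dots,e_n\mid e_j^{p_j}=1\rangle$ is the free product of the $\ZZ/p_j\ZZ$.

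The main obstacle is making the orbifold Seifert--van Kampen step and the loop/stabilizer dictionary precise: one must justify that $\uhp\to[\Gamma\backslash\uhp]$ really is a universal covering \emph{of orbifolds}, that the ordinary computation of $\pi_1$ of a bordered surface survives the filling-in of the cone points, and that the generators thereby produced are conjugate to generators of the isotropy subgroups. All of this is standard, but it is the place where care is needed. If one prefers to avoid the orbifold formalism altogether, the classical alternative is to take a Dirichlet fundamental polygon for $\Gamma$ acting on $\uhp$; for a genus-zero group it can be chosen with finitely many sides whose side-pairing transformations are the elliptic elements fixing the $\tau_j$ (pairing the two sides at each elliptic cycle) together with one parabolic per cusp, and Poincar\'e's polygon theorem then outputs generators and relations which, after eliminating the elliptic-cycle relations, are exactly those above. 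The proposal is to carry out the first, topological route.
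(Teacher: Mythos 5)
Your argument is correct, but note that the paper does not actually prove this proposition: its ``proof'' consists of the remark that this is a classical result of Klein and Poincar\'e and a pointer to Stillwell's book for the general higher-genus case. What you have written is essentially the standard argument that those references supply, namely the computation of the orbifold fundamental group of a genus-zero $2$-orbifold with $n+1$ cone points and $m$ punctures by excising disks, reading off the free group on the boundary loops with the single product relation, and then using orbifold Seifert--van Kampen to impose $\gamma_j^{p_j}=1$ when the orbifold disks are reglued (with the punctured disks at the cusps contributing nothing). Your bookkeeping is right: the planar surface with $n+1+m$ boundary components gives $n+1+m$ generators and one relation, the cone points add the torsion relations, and eliminating $\sigma_1$ when $m=1$ exhibits $\Gamma$ as the free product $\ZZ/p_0\ZZ * \cdots * \ZZ/p_n\ZZ$. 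You also correctly isolate the two points that need care --- that $\uhp \to [\Gamma\backslash\uhp]$ is the universal \emph{orbifold} covering (which holds because $\uhp$ is a simply connected manifold and $\Gamma$ acts properly discontinuously, so the orbifold is developable with $\pi_1^{\mathrm{orb}} \cong \Gamma$), and that the small boundary loops are conjugate to generators of the corresponding isotropy subgroups --- though a fully rigorous treatment would still have to spell these out or cite them. Your suggested alternative via a finite-sided Dirichlet fundamental polygon and Poincar\'e's polygon theorem is the other classical route and is equally acceptable; it has the advantage of producing the isotropy generators directly as side-pairing transformations, at the cost of invoking the polygon theorem. Either way, your proposal supplies genuine content where the paper defers entirely to the literature.
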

\begin{proof}
This is a classical result that goes back to Klein and Poincar\'{e}, but it is hard to find a good reference. See the book \cite{Stillwell} for a discussion that treats the general case of higher genus orbifolds.
\end{proof}

Let $M_{\Gamma}$ be the Riemann surface underlying $\sM_{\Gamma}$, a compact, connected Riemann surface equipped with a map $j:\sM_{\Gamma}\rightarrow M_{\Gamma}$ which is universal among all maps from $\sM_{\Gamma}$ to a Riemann surface. Assume that the genus of $\Gamma$ is zero, so that $\sM_{\Gamma}$ is an orbifold line and there is an isomorphism of Riemann surfaces
$$
M_{\Gamma} \stackrel{\cong}\longrightarrow \PP^1. 
$$

By general descent theory, a vector bundle on $\sM_{\Gamma}$ can be specified by giving a vector bundle $\cV^{\circ}$ on the open quotient stack $\left[\Gamma\backslash\mathfrak{h}\right]$, together with the data of a pair $(\cW_s, \phi_s)$, for each cusp $s$, of a vector bundle $\cW_s$ over $\DD$ and an isomorphism of vector bundles
$$
\iota_s^* \cV^{\circ} \cong \iota^*\cW_s
$$
lying over the isomorphism $\tau\mapsto e^{2\pi i \alpha(\tau)/h}$ (notation as in \eqref{diagram:compactification}). In particular a complex, finite dimensional representation 
$$
\rho: \Gamma \longmapsto \GL(V),
$$ 
gives by definition a local system $\cV^{\circ}(\rho)$ on $\left[\Gamma\backslash\mathfrak{h}\right]$. This local system maybe extended to a vector bundle over all of $\sM_{\Gamma}$ as follows: for each cusp $s$, let $T_s \df \stwomat 1h01$ and let $\gamma_s \df \alpha^{-1}T_s\alpha$. Choose $L_s \in \End(V)$ such that 
$$
e^{2\pi i L_s} = \rho(\gamma_s).
$$
The extension of $\cV^{\circ}(\rho)$ to $\sM_{\Gamma}$ at each cusp $s$ is then given by the trivial vector bundle $\cO_{\DD}^{\oplus \dim \rho}$ over $\DD$ together with the isomorphism
$$
\phi_s(v,\tau) = (e^{-2\pi i \alpha(\tau) L_s/h}\,v, e^{2 \pi i \alpha(\tau)/h} )
$$
of $\iota_s^*\cV^{\circ}(\rho)$ with $\iota^*\cO_{\DD}^{\oplus \dim \rho}$. 

\begin{dfn}
Let $L = \{ L_s \}_{s\in \mathrm{cusps}(\Gamma)}$ be the vector of matrices chosen as above. The extension of the vector bundle $\cV^{\circ}(\rho)$ to $\sM_{\Gamma}$ given above is called the {\em extension corresponding to the choice of exponents $L$} and it is denoted by $\cV_L(\rho)$. If all eigenvalues of each $L_s$ have real part contained in $[0,1)$, we say that $L$ is the {\em standard choice of exponents} and that $\cV_L(\rho)$ is {\em the canonical extension} of  $\cV^{\circ}(\rho)$, denoted by $\cV(\rho)$. If all eigenvalues of each $L_s$ have real part contained in $(0,1]$, we say that $L$ is the {\em cuspidal choice of exponents} and that $\cV_L(\rho)$ is {\em cuspidal extension} of  $\cV^{\circ}(\rho)$, denoted by $\cS(\rho)$.
\end{dfn}

There is a special line bundle $\cL_2$ over $\sM_{\Gamma}$ of {\em modular forms of weight two}, which is the extension of the line bundle given on the open quotient $\left[\Gamma\backslash\mathfrak{h}\right]$ by the 1-cocycle
$$
\twomat abcd \longmapsto (c\tau + d)^2,
$$
and characterized by the isomorphism
$
\cL_2 \cong \Omega^1_{\sM_{\Gamma}}\left(\log \sum s_i\right),
$
so that
$$
\deg \cL_2 = \deg \omega + \#\,\mathrm{cusps}(\Gamma) = (n-1) - \sum_{i=0}^n \frac{1}{p_i} + \#\,\mathrm{cusps}(\Gamma). 
$$

For any $k\in 2\ZZ$, the global sections of $\cL_k\df\cL_2^{\otimes k/2}$ are precisely the classical {\em holomorphic modular forms of weight $k$} on the group $\Gamma$, whose space is denoted by $M_k(\Gamma)$. Let 
$$
R(\Gamma)\df \bigoplus_{k\in 2\ZZ} M_k(\Gamma)
$$
be the ring of modular forms over $\Gamma$. If $\cV(\rho)$ is the canonical extension of a local system given by $\rho$, let 
$$
\cV_k(\rho)\df \cV_k(\rho)\otimes\cL_k.
$$

\begin{dfn}
A {\em holomorphic, $\rho$-valued modular form of weight $k$ on $\Gamma$} is a global section of $\cV_k(\rho)$ over $\sM_{\Gamma}$. The space of all such sections is denoted by $M_k(\rho)$. The  {\em module of $\rho$-valued modular forms} is the $\ZZ$-graded $R(\Gamma)$-module
$$
M(\rho)\df \bigoplus_{k\in 2\ZZ} M_k(\rho).
$$
\end{dfn}

The rings $R(\Gamma)$ are classical and well-studied objects (see \cite{VoightZB} and the references therein). The structure of the $R(\Gamma)$-modules $M(\rho)$ are however unknown except in level one, where it is known that $M(\rho)$ is always free of rank $\dim \rho$ \cite{MarksMason}. The goal of the following sections is to demonstrate that the modules $M(\rho)$ are in fact poorly behaved, and that a more well-behaved object $\GM(\rho)$ is given by a {\em geometrically weighted} version of $M(\rho)$. In order to define it, let 
$$
S(\Gamma)\df S(\sM_{\Gamma})
$$
denote the ring obtained by viewing $\sM_{\Gamma}$ as an orbifold line canonically embedded into weighted projective space as in Section \ref{coherentSheaves}. This is a $W(\Gamma)$-graded ring, where $W(\Gamma)\df \Pic(\sM_{\Gamma})$ is a finitely generated abelian group of rank one (Proposition \ref{prop:picardGroupOfStackyLines}). It is the ring generated by the spaces of global sections of \emph{all} of the line bundles on $\sM_\Gamma$, not just the modules $\cL_k$.

\begin{dfn}
The module of {\em geometrically weighted} $\rho$-valued modular forms  is the $W(\Gamma)$-graded $S(\Gamma)$-module
$$
\GM(\rho)\df \GM_*(\cV(\rho)).
$$
\end{dfn}

The modules $GM(\rho)$ of geometrically weighted modular forms seem to be easier to study than the modules $M(\rho)$, since their structure mirrors that of vector bundles over $\sM_{\Gamma}$. For example, the following general result is a special case of Corollary \ref{cor:moduleSplitting}:

\begin{thm}
\label{thm:FMT}
Suppose that $\Gamma$ has at most two elliptic points, and let $\rho: \Gamma \longmapsto \GL(V)$ be any complex finite dimensional representation.  Then $\GM(\rho)$ is free of rank $\dim \rho$ over $S(\Gamma)$.
\end{thm}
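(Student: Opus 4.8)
The plan is to obtain the theorem as an immediate consequence of Corollary \ref{cor:moduleSplitting}. By definition $\GM(\rho) = \GM_*(\cV(\rho))$, where $\cV(\rho)$ is the canonical extension to $\sM_\Gamma$ of the local system $\cV^{\circ}(\rho)$ attached to $\rho$. Since $\cV^{\circ}(\rho)$ is a local system of rank $\dim\rho$ on $[\Gamma\backslash\uhp]$ and its canonical extension is a vector bundle of the same rank on $\sM_\Gamma$, the theorem follows from Corollary \ref{cor:moduleSplitting} once we check that $\sM_\Gamma$ is an orbifold line with at most two orbifold points.

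First I would record why $\sM_\Gamma$ is an orbifold line whose orbifold points are precisely the elliptic points of $\Gamma$. The genus zero hypothesis in force throughout this section means exactly that the underlying Riemann surface $M_\Gamma$ is isomorphic to $\PP^1$, so $\sM_\Gamma$ is an orbifold curve of genus zero, i.e. an orbifold line. Its nontrivial cyclic stabilizers occur exactly at the elliptic points $\tau_0,\ldots,\tau_n$, with orders $p_0,\ldots,p_n$: on the open part $[\Gamma\backslash\uhp]$ the only orbifold points are the images of the elliptic fixed points, while the compactification procedure of \eqref{diagram:compactification} glues an honest disk $\DD$ around each cusp, introducing an ordinary (non-orbifold) point. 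Hence the signature of $\sM_\Gamma$ is $(p_0,\ldots,p_n)$, and ``$\Gamma$ has at most two elliptic points'' is exactly ``$\sM_\Gamma$ has at most two orbifold points.''

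Having verified the hypotheses, Corollary \ref{cor:moduleSplitting} applies verbatim to $\cV = \cV(\rho)$ and yields that $\GM_*(\cV(\rho))$ is free of rank $\rk \cV(\rho) = \dim\rho$ over $S(\Gamma)$, which is the assertion. If one wishes to unwind the proof: Theorem \ref{thm:SplittingTheorem} provides a splitting $\cV(\rho) \cong \bigoplus_{i=1}^{\dim\rho} \cO(a_i)$ with $a_i \in W(\Gamma) = \Pic(\sM_\Gamma)$, and applying the additive functor $\GM_*$, which sends $\cO(a_i)$ to the shifted module $S(\Gamma)[-a_i]$ (as recalled after Theorem \ref{t:geigleLenzing}), gives
\[
\GM(\rho) \cong \bigoplus_{i=1}^{\dim\rho} S(\Gamma)[-a_i],
\]
a free $S(\Gamma)$-module of rank $\dim\rho$. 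Alternatively one can invoke the remark after Corollary \ref{cor:moduleSplitting}: when $\sM_\Gamma$ has at most two orbifold points the ring $S(\Gamma)$ is a polynomial ring in two variables, and $\GM_*(\cV(\rho))$, being maximal Cohen--Macaulay by Theorem \ref{t:geigleLenzing}, is therefore free.

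As for the main difficulty, there is essentially none beyond what is already packaged in Theorem \ref{thm:SplittingTheorem}, whose cohomological splitting argument (modeled on the case of $\PP^1$) does the real work. The only points that genuinely need attention are bookkeeping: that the canonical extension $\cV(\rho)$ is a bona fide vector bundle on the \emph{compact} orbifold $\sM_\Gamma$ and not merely a coherent sheaf or a bundle on the open part --- which is precisely what the construction via a choice of exponents $L$ supplies, the isomorphism class of the canonical extension being independent of that choice --- and that adjoining the cusps creates no new orbifold points, so that $\sM_\Gamma$ really does fall under the two-orbifold-point hypothesis of Corollary \ref{cor:moduleSplitting}.
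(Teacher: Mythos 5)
Your proposal is correct and follows exactly the paper's route: the paper itself presents Theorem \ref{thm:FMT} as an immediate special case of Corollary \ref{cor:moduleSplitting}, applied to the canonical extension $\cV(\rho)$, which is a rank $\dim\rho$ vector bundle on the orbifold line $\sM_\Gamma$ whose orbifold points are the elliptic points of $\Gamma$. Your additional bookkeeping (that the cusps contribute no orbifold points and that the signature of $\sM_\Gamma$ matches the elliptic data of $\Gamma$) is a faithful unwinding of what the paper leaves implicit.
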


One would like to have a purely automorphic description of the ring $S(\Gamma)$ and the modules $\GM(\rho)$. Since the upper half plane is a Stein manifold, all vector bundles on curves uniformized by Fuchsian groups pull back to bundles described by automorphy factors. Thus, an automorphic description of these objects exists, however it seems to be a slightly delicate problem to obtain an explicit one in generality. In certain cases, however, one has access to a convenient description. To state it, we first give an automorphic description of $\Pic_0(\sM_\Gamma)$.
\begin{dfn}
A \emph{cuspidal character} of a Fuchsian group $\Gamma$ is a rank one representation $\chi\colon \Gamma\to \CC^\times$ such that $\chi(\sigma_j) = 1$ for each cuspidal isotropy generator $\sigma_j$ in Proposition \ref{p:fuchsianpresentation}.
\end{dfn}
Since the fundamental group of $\sM_\Gamma$ is the quotient of the fundamental group of $[\Gamma \backslash \uhp]$ obtained by throwing away the cuspidal generators, and since $\Pic_0(\sM_\Gamma) = \Hom(\pi_1(\sM_\Gamma),\CC^\times)$, we find that the group of cuspidal characters of the Fuchsian group $\Gamma$ is naturally identified with $\Pic_0(\sM_\Gamma)$. Thus from the exact sequence
\[
  0 \to \Pic_0(\sM_\Gamma) \to \Pic(\sM_\Gamma) \to \frac{1}{m}\ZZ \to 0,
\]
we see that to give an automorphic description of $S(\Gamma)$, it suffices to give an automorphic description of a line bundle of degree $1/m$, where $m = \lcm(p_0,\ldots, p_n)$. The most convenient cases are when $\cL_2$ is of degree $\frac 1m$. This holds in the cases of signature $(2,3)$, $(3,3)$ and $(2,2,2)$ discussed below, but not in the last case of signature $(2,2,3)$ that we consider.

When $\cL_2$ is of degree $\frac 1m$, one finds that
\begin{align*}
  S(\Gamma) &\cong \bigoplus_{(\chi,k)} M_k(\Gamma,\chi), & \GM(\rho) & \cong \bigoplus_{(\chi,k)} M_k(\Gamma, \rho\otimes \chi),
\end{align*}
where the direct sums are over all of the cuspidal characters $\chi$ and integer weights $k$. Crucial to this description is the fact that if $L$ denotes a choice of exponents for a representation $\rho$, then one can use the exact same exponents for $\rho \otimes \chi$ for any cuspidal character $\chi$. Hence one has $\cV_{k,L}(\rho\otimes \chi) = \cV_{0,L}(\rho)\otimes \cV_{k}(\chi)$, where $\cV_{k}(\chi)$ denotes the canonical extension of the cuspidal character $\chi$. For characters that are not cuspidal, one need not have compatibility of extensions like this (instead one must adjust the exponents on the left side of the identity). Note also that since this holds for an arbitrary choice of exponents for $\rho$, one can define $\GM_L(\rho)$ using these exponents, and one obtains an analogous description in terms of the $M_{k,L}(\Gamma,\rho\otimes \chi)$. However, it is not usually the case that $\cL_2$ is of degree $\frac 1m$, and so in most cases the ring $\bigoplus_{(\chi,k)} M_k(\Gamma,\chi)$ generated by the cuspidal characters is strictly smaller than $S(\Gamma)$. Our final example below illustrates this phenomenon.

\section{Examples and counterexamples}
\label{s:examples}

This section presents several examples that illustrate the theory above.

\subsection{Level one}
Let $\Gamma(1) = \PSL_2(\ZZ)$. This group has two elliptic points $\tau_0 = i$ and $\tau_1 = e^{2\pi i/3}$ of order 2 and 3, respectively. The abelian group $W=\Pic(\sM_{\Gamma(1)})$ is generated by two elements $x_0$ and $x_1$ such that $2x_0 = 3x_1$, and $\PP(W)$ is the weighted projective line $\PP(2,3)$ (as in \cite{CandeloriFranc}). There is a unique order-preserving isomorphism $W\cong \ZZ$ such that $x_0$ and $x_1$ correspond to the elements $3$ and $2$, respectively. The canonical generator for $W$ is the class of the line bundle of weight $2$ modular forms $\cL_2$, of degree $\frac 16$. The line bundle $\cO(x_0)$, of degree $\frac{1}{2}$, must then be isomorphic to $\cL_6$ and by Riemann-Roch it has a unique (up to rescaling) global section $E_6$ with a simple zero at the elliptic point of order $2$. Similarly, the line bundle $\cO(x_1)$ is isomorphic to $\cL_4$ and it has a unique (up to rescaling) global section $E_4$ with a simple zero at the elliptic point of order 3. The isomorphism 
$$
s:\sM_{\Gamma(1)} \stackrel{\cong}\rightarrow C(W) 
$$
in this case is given in coordinates by $\tau \mapsto [E_6(\tau), E_4(\tau)]$. The map induced by $s$ at the level of underlying Riemann surfaces $M_{\Gamma(1)}\rightarrow \PP^1$ is given in coordinates by 
$$
\tau \mapsto [E^2_6(\tau), E^3_4(\tau)],
$$
which sends $\tau_0 = i$ to $\lambda_0 = \infty = [0,1]$ and $\tau_1 = e^{2\pi i/3}$ to $\lambda_1 = 0 = [1,0]$. Up to a linear change of coordinates in $\PP^1$, this map  corresponds with the usual $j$-function. Note that since $W \cong \ZZ$, generated by $\cL_2$, we have 
$$
S(\Gamma(1)) = R(\Gamma(1)) = \CC[E_4,E_6]
$$
and similarly $M(\rho) = \GM(\rho)$, for any representation $\rho: \Gamma(1) \rightarrow \GL(V)$. Thus, for $\Gamma(1)$, geometrically weighted modules of modular forms correspond to the usual $\ZZ$-graded modules of modular forms. In particular, Theorem \ref{thm:FMT} specializes in level one to the well-known {\em free module theorem} for vector valued modular forms on $\Gamma(1)$ (\cite{MarksMason}, \cite{Gannon}, \cite{CandeloriFranc}).

\subsection{The subgroup of index two}
\label{ss:indexTwo}
Let $\Gamma^2$ denote the subgroup of $\PSL_2(\ZZ)$ of index two. This group is discussed in Section 1.3 of \cite{Rankin}. If $\chi$ denotes the generating character of $\PSL_2(\ZZ)$ satisfying $\chi(T) = e^{\frac{2\pi i}{6}}$, then $\Gamma^2 = \ker \chi^3$, so that $T$ is not an element of $\Gamma^2$. Similarly $S$ is not an element of $\Gamma^2$, while $R$ is an element of $\Gamma^2$. Thus, $\tau_0 = \zeta = e^{2\pi i/3}$ and $\tau_1 = \zeta+1$ are two elliptic points for $\Gamma^2$ with stabilizers  $R_0 =R$ and $R_1 = TRT^{-1}$, respectively, and there are no other elliptic points. Since $\Gamma^2$ has a unique cusp, it follows by Proposition \ref{p:fuchsianpresentation} that $\Gamma^2$ is freely generated by $R_0$ and $R_1$. Since $R_0$ and $R_1$ are of order three, it follows that $\chi^3$ restricts to the trivial character of $\Gamma^2$, but that $\chi$ and $\chi^2$ are nontrivial. Observe that $R_1R_0 = T^2$ in $\PSL_2(\RR)$.

Define characters $\alpha_0,\alpha_1 \in\Hom(\Gamma^2,\CC^\times)$ by setting $\alpha_i(R_i) = \zeta$ and $\alpha_i(R_j) = 1$ if $i \neq j$. Since $\chi(R_0) = \chi(R_1) = \zeta^2$, we have $\chi = \alpha_0^2\alpha_1^2$. The subgroup of cuspidal characters is $\{1,\alpha_0\alpha_1^2,\alpha_0^2\alpha_1)$. If $\phi \in \Hom(\Gamma^2,\CC^\times)$, then let $\phi^T$ denote the conjugate character defined by
\[
  \phi^T(g) = \phi(T^{-1}gT).
\]
If $\rho = \Ind\phi$, where by $\Ind$ we mean the induction up to $\PSL_2(\ZZ)$, then observe that there exists a basis for $\rho$ such that
\begin{align*}
\rho(T) &= \twomat 0{\phi(T^2)}10, & \rho(S) &= \twomat 0{\phi(R)}{\phi(T^{-1}RT^{-1})}0, & \rho(R) &= \twomat{\phi(R)}{0}{0}{\phi^T(R)}.
\end{align*}
In particular, $(\det\Ind\phi)(T) = -\phi(T^2)$. Mackey's criterion for irreducibility implies that $\Ind \phi$ is irreducible if and only if $\phi\neq \phi^T$. The characters fixed by conjugation are precisely the characters $\chi^r = \alpha_0^r\alpha_1^{r}$, and for each of these one has $\Ind \chi^r \cong \chi^r\oplus \chi^{r+3}$. 

To determine the pairs $(\phi,\phi^T)$ for the remaining characters, observe that since $R_0 = ST$ and $R_1 = TS$, one has $\alpha_0^T = \alpha_1$. Thus, the characters $\alpha_0^u\alpha_1^u$ have reducible inductions, while the characters in each pair $(\alpha_0^u\alpha_1^v,\alpha_0^v\alpha_1^u)$ for $u \neq v$ yield irreducible and isomorphic inductions. The following table lists the exponents for the induced representations and the minimal weight, obtained via the isomorphism $M(\phi) \cong M(\Ind \phi)$ and results about vector valued modular forms of rank two for $\SL_2(\ZZ)$ (cf. \cite{FrancMason1} or \cite{FrancMason2}). 
\[
\def\arraystretch{1.5}
\begin{array}{c|c|c|c}
(\phi,\phi^T) &(\alpha_0\alpha_1^2,\alpha_0^2\alpha_1)&  (\alpha_0,\alpha_1)& (\alpha_0^2,\alpha_1^2)\\
\hline
\phi(T^2) & 1& \zeta & \zeta^2\\
\hline
\textrm{Exponents for } \Ind \phi & 0, \frac 12&\frac 16, \frac 23 &\frac{1}{3},\frac{5}{6}\\
\hline
\textrm{Minimal weight} &2&4&6
\end{array}
\]
Moving to the right along this table corresponds to tensoring with $\chi$. In general, multiplication by $\eta^4$ defines an inclusion $M(\rho) \hookrightarrow M(\rho\otimes \chi)$. For $\rho$ equal to a character $\phi$ as in the table, one sees that this map is in fact an isomorphism, since the minimal weights increase by two with each twist.

For ease of notation set $\beta = \alpha_0\alpha_1^2$. In the summer of 2016, Geoff Mason showed that not all modules $M(\rho)$ of vector valued modular forms for $\Gamma^2$ are free over the ring $R$ of scalar valued modular forms for $\Gamma^2$. Since $R$ is not a polynomial ring, it is natural to ask whether the modules $M(\rho)$ are at least projective over $R$, in analogy with the free module theorem in level one. We will show that $M(\beta)$ is not a projective module over the ring 
\[R = \frac{\CC[E_4,E_6,\eta^{12}]}{(E_4^3-E_6^2-12^3\eta^{24})}\] 
of scalar valued modular forms for $\Gamma^2$. Here $E_k$ denotes the Eisenstein series of weight $k$ for $\Gamma(1)$, normalized so that its constant term is $1$, and $\eta$ denotes the Dedekind eta function.

It is easy to see that $M(\beta)$ is not a \emph{free} $R$-module. This is because $\Ind(\beta)$ is irreducible, and since $M(\beta) \cong M(\Ind \beta)$, it follows that the Hilbert-Poincar\'{e} series $\sum_{k \in \ZZ} \dim M_k(\beta)T^k$ for the graded module $M(\beta)$ is equal to $\frac{T^2+T^4}{(1-T^4)(1-T^6)}$ (cf. \cite{FrancMason1} or \cite{FrancMason2}). However, a free graded $R$-module has a Hilbert-Poincar\'{e} series that is a sum of series of the form $\frac{T^{l}+T^{l+6}}{(1-T^4)(1-T^6)}$ for various weights $l$. It follows that $M(\beta)$ is not a free $R$-module.

The exponents of $\rho = \Ind\beta$ are $0$ and $1/2$. This corresponds to Example 21 in \cite{FrancMason2}. There exists a basis for $M(\beta)$ in terms of the theta series\footnote{Here $q = e^{2\pi i \tau}$ rather than $q = e^{\pi i \tau}$.}
\begin{align*}
\theta_2(q) &= 2\sum_{n = 0}^{\infty} q^{\frac{(2n+1)^2}{8}}, & \theta_3(q) &= 1+2\sum_{n\geq 1} q^{n^2/2}, & \theta_4(q) &= 1+2\sum_{n\geq 1}(-1)^nq^{n^2/2}.
\end{align*}
Using the transformation laws of theta series, and the classical fact that $\theta_3^4=\theta_2^4+\theta_4^4$, one sees that
\[
  F = (1 + e^{2\pi i \frac 16})\theta_2^4-e^{2\pi i \frac 56}(\theta_3^4+\theta_4^4)
\]
is the unique, up to rescaling, form $F \in M_2(\beta)$. Further, the structure theory of vector valued modular forms of rank two (cf. \cite{Mason}, \cite{FrancMason1} or \cite{FrancMason2}) implies that $F$ and $DF$ generate $M(\beta)$ freely as an $R(1) = \CC[E_4,E_6]$ module. By examining $q$-expansions, one finds that
\begin{align*}
\eta^{12}F &= u\left(E_6F+6E_4DF\right), & \eta^{12}DF &= -\frac{1}{6}u\left(E_4^2F+6E_6DF\right),
\end{align*}
where $u = \frac{1}{72}(2e^{2\pi i\frac 16}-1)$ is a square root of $-12^{-3}$.

\begin{prop}
\label{p:projectiveFail}
The module $M(\beta)$ of vector valued modular forms for $\beta$ is not projective over the ring $R = \CC[E_4,E_6,\eta^{12}]$ of scalar valued modular forms for $\Gamma^2$.
\end{prop}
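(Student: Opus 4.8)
The plan is to derive a contradiction from the rank function attached to a hypothetical projective structure. Since $R$ is an integral domain, $\Spec R$ is connected, so a finitely generated projective $R$-module has a well-defined constant rank; I will show that the generic rank of $M(\beta)$ is $1$, whereas its fibre at the irrelevant maximal ideal $\frakm=(E_4,E_6,\eta^{12})$ has dimension at least $2$.

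First I would pin down the generic rank. The ring $R(1)=\CC[E_4,E_6]$ is a polynomial ring, $E_4^3-E_6^2$ is irreducible there (hence not a square), and $\eta^{24}=\tfrac{1}{12^3}(E_4^3-E_6^2)$, so $R=R(1)[\eta^{12}]$ is a domain, free of rank $2$ over $R(1)$, and $\Frac(R)=R\otimes_{R(1)}\Frac(R(1))$ is a degree-two field extension of $\Frac(R(1))$. Since $F$ and $DF$ freely generate $M(\beta)$ over $R(1)$, we get $M(\beta)\otimes_{R(1)}\Frac(R(1))\cong\Frac(R(1))^2$, and therefore $M(\beta)\otimes_R\Frac(R)\cong\Frac(R)$ is one-dimensional over $\Frac(R)$. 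The explicit relations $\eta^{12}F=u(E_6F+6E_4\,DF)$ and $\eta^{12}DF=-\tfrac16 u(E_4^2F+6E_6\,DF)$ make this concrete: the matrix $A$ by which $\eta^{12}$ acts on the basis $(F,DF)$ satisfies $A^2=\eta^{24}\cdot\mathrm{Id}$ and is not scalar, so $\Frac(R)$ acts faithfully on $\Frac(R(1))^2$, making it a rank-one $\Frac(R)$-vector space. Thus $M(\beta)$ has generic rank $1$ over $R$.

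Next I would estimate the fibre at $\frakm=R_{>0}$, a maximal ideal with residue field $\CC$. The Hilbert--Poincar\'e series $\frac{T^2+T^4}{(1-T^4)(1-T^6)}$ computed above gives $\dim M_2(\beta)=\dim M_4(\beta)=1$, spanned by $F$ and $DF$ respectively; since every nonzero homogeneous element of $\frakm$ has weight $\ge 4$ and $M(\beta)$ vanishes in weights below $2$, the graded submodule $\frakm M(\beta)$ vanishes in weights $2$ and $4$. Hence $F$ and $DF$ map to $\CC$-linearly independent classes in $M(\beta)/\frakm M(\beta)$, so $\dim_{\CC}M(\beta)\otimes_R R/\frakm\ge 2$. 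Combining the two computations: if $M(\beta)$ were projective over $R$, its rank would be the constant $1=\dim_{\Frac R}M(\beta)\otimes_R\Frac(R)$, yet it would also equal $\dim_{\CC}M(\beta)\otimes_R R/\frakm\ge 2$, a contradiction.

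The step I expect to need the most care is the generic-rank computation --- that is, verifying that enlarging $R(1)$ to $R$ genuinely halves the rank of $M(\beta)$, which is exactly where the explicit $\eta^{12}$-action is used rather than abstract nonsense; the remaining steps are bookkeeping with the grading. (Alternatively one can avoid rank arguments entirely: a finitely generated graded module over an $\mathbf N$-graded ring with $R_0$ a field that is projective must be graded free by a graded Nakayama argument applied to a minimal set of homogeneous generators, and $M(\beta)$ has already been shown not to be free.)
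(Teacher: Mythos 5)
Your argument is correct, and it takes a genuinely different route from the paper. The paper proves non-projectivity by exhibiting the specific presentation $0 \to \ker p \to R\oplus R \to M(\beta)\to 0$, computing an explicit $R(1)$-basis of $\ker p$ and the matrix of $\eta^{12}$ on it, and then verifying by a direct (and somewhat laborious) system of equations that no splitting exists. You instead invoke the local constancy of the rank function of a finitely generated projective module over the domain $R$: the generic rank is $1$, because $M(\beta)\otimes_R\Frac(R)\cong M(\beta)\otimes_{R(1)}\Frac(R(1))\cong \Frac(R(1))^2$ is one-dimensional over the quadratic extension $\Frac(R)$, while the fibre at the irrelevant ideal $\frakm$ has dimension at least $2$ since $R_2=0$ and $F$, $DF$ survive in $M(\beta)/\frakm M(\beta)$ in weights $2$ and $4$. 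All the inputs you use (freeness of $M(\beta)$ over $R(1)$ on $F,DF$, the Hilbert--Poincar\'e series, the degree-two relation defining $R$) are established in the paper before the proposition, so nothing is missing. What each approach buys: the paper's computation is elementary and self-contained but tied to this particular module, whereas your argument is shorter, more conceptual, and shows more --- any finitely generated graded $R$-module whose Hilbert series forces unequal generic and special fibre ranks fails to be projective. Your parenthetical alternative (graded Nakayama: a finitely generated graded projective module over a connected positively graded $\CC$-algebra is graded free, so non-free already implies non-projective given the earlier Hilbert-series argument) is also valid and is arguably the quickest correct route of all.
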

\begin{proof}
Consider the exact sequence
\begin{equation}
\label{eq:exseq}
  0 \to \ker p \to R\oplus R \stackrel{p}{\to} M(\beta) \to 0
\end{equation}
of $R$-modules defined by $p(x,y) = xF+yDF$. We will show that (\ref{eq:exseq}) is not split, and hence $M(\beta)$ is not a projective $R$-module.

Let $(a+b\eta^{12},c+d\eta^{12}) \in \ker p$ where $a$, $b$, $c$ and $d$ are classical scalar forms of level one. By hypothesis
\begin{align*}
  0 &= aF+b\eta^{12}F +cDF + d\eta^{12}DF\\
&= aF+bu(E_6F+6E_4DF)+cDF-\frac{1}{6}du\left(E_4^2F+6E_6DF\right)\\
&= \left(a+buE_6-\frac{1}{6}duE_4^2\right)F+\left(c+6buE_4-duE_6\right)DF 
\end{align*}
Thus, after relabeling variables,
\[
  \ker p = \left\{\left(u(bE_4^2-aE_6)+a\eta^{12}, 6u(bE_6-aE_4)+6b\eta^{12}\right) \mid a,b \in R(1)\right\}.
\]
An $R(1)$-basis for $\ker p$ is given by
\begin{align*}
  e_1 &= (-uE_6+\eta^{12},-6uE_4), & e_2 &= (uE_4^2,6uE_6+6\eta^{12}).
\end{align*}
Observe that since $-12^{3}u^2 = 1$, then $\Delta = u^2(E_6^2-E_4^3)$. It follows that the matrix of multiplication by $\eta^{12}$ acting on $\ker p$ in this $R(1)$-basis is
\[u\cdot \twomat {-E_6}{E_4^2}{-E_4}{E_6}.\]

The preimages of $F$ and $DF$ under $p$ must be of the form
\begin{align*}
  b_1 &= (1,0) + ae_1+be_2,&  b_2 &= (0,1) + ce_1+de_2,
\end{align*}
for unique scalar forms $a,b,c,d \in R(1)$. In order for the exact sequence (\ref{eq:exseq}) to split, it must be possible to find choices of $b_1$ and $b_2$ such that $R(1)b_1\oplus R(1)b_2$ is stable under multiplication by $\eta^{12}$. To see that no such choice exists, observe that
\begin{align*}
  \eta^{12}b_1 &=(\eta^{12},0) + u(bE_4^2-aE_6)e_1+u(bE_6-aE_4)e_2,\\
  \eta^{12}b_2 &=(0,\eta^{12}) + u(dE_4^2-cE_6)e_1+u(dE_6-cE_4)e_2.
\end{align*}
Solving $xb_1+yb_2 = \eta^{12}b_1$ for $x,y\in R(1)$ amounts to solving
\begin{align*}
\eta^{12}b_1=  &(x,y) + (xa+yc)e_1+(xb+yd)e_2\\
  =&(x,y) -u(xa+yc)(E_6,6E_4)+u(xb+yd)(E_4^2,6E_6)\\
   &\quad  + (xa+yc)(\eta^{12},0)+(xb+yd)(0,6\eta^{12})
\end{align*}
On the other hand,
\begin{align*}
  \eta^{12}b_1 =& (1+u(bE_4^2-aE_6))(\eta^{12},0)+u(bE_4^2-aE_6)(-uE_6,-6uE_4)+\\
  &\quad u(bE_6-aE_4)(0,6\eta^{12})+u(bE_6-aE_4)(uE_4^2,6uE_6)
\end{align*}
These considerations lead to the equations
\begin{align*}
  xa+yc &= 1+u(bE_4^2-aE_6),\\
  xb+yd &= u(bE_6-aE_4),\\
  x-u(xa+yc)E_6+u(xb+yd)E_4^2 &= -u^2(bE_4^2-aE_6)E_6+u^2(bE_6-aE_4)E_4^2,\\
  y-6u(xa+yc)E_4+6u(xb+yd)E_6 &= -6u^2(bE_4^2-aE_6)E_4+6u^2(bE_6-aE_4)E_6.
\end{align*}
Substituting the first two equations above into the last two yields $x = uE_6$ and $y = 6uE_4$. But then the first equation is not satisfied. This shows that the exact sequence (\ref{eq:exseq}) is not split as an $R$-module, and hence $M(\beta)$ is not a projective $R$-module.
\end{proof}

In this case since $1$, $\beta$ and $\beta^2$ are the cuspidal characters, and since $\cL_2$ is of degree $1/3$, one has
\[
  S(\Gamma^2) = M(\Gamma^2,1)\oplus M(\Gamma^2,\beta)\oplus M(\Gamma^2,\beta^2).
\]
The minimal weights for $\beta$ and $\beta^2$ are both $2$, and $\dim M_2(\Gamma^2,\beta) = \dim M_2(\Gamma^2,\beta^2) = 1$. If $F$ and $G$ denote generators for these spaces, then $S(\Gamma^2) = \CC[F,G]$. If $\rho$ is any representations of $\Gamma^2$, and if $L$ denotes any choice of exponents for $\rho(T^2)$, then Corollary \ref{cor:moduleSplitting} states that $\GM_L(\rho) = M_L(\rho) \oplus M_L(\rho \otimes \beta) \oplus M_L(\rho \otimes \beta^2)$ is a free module over $\CC[F,G]$ of rank $\dim \rho$. This is the appropriate generalization of the free module theorem of \cite{MarksMason} to the subgroup of $\PSL_2(\ZZ)$ of index $2$. 

\begin{rmk}
The free module theorem for $\Gamma^2$ has been obtained independently by Gannon using a different argument (unpublished note). Gannon has also proved similar results for other subgroups, but it is not clear to the authors whether Gannon's method generalizes as easily as the geometric arguments presented here.
\end{rmk}

\subsection{The normal subgroup of index three}
\label{example:normalIndex3}
Let $\Gamma^3$ denote the unique normal subgroup of $\PSL_2(\ZZ)$ of index $3$, which is discussed in Section 1.2 of \cite{Rankin}. Since $\Gamma^3 = \ker \chi^2$, it follows that $T^3 \in \Gamma^3$, $S \in \Gamma^3$ and $R \not \in \Gamma^3$. The elliptic points are $\tau_0 = i$, $\tau_1 = i+1$ and $\tau_2 = i+2$. The matrices $S_0 = S$, $S_1 = TST^{-1}$, $S_2 = T^2ST^{-2}$ generate the isotropy subgroups, and thus by Proposition \ref{p:fuchsianpresentation}, they generate $\Gamma^3$ freely. 

Define characters of $\Gamma^3$ by setting $\alpha_i(S_i) = -1$ and $\alpha_i(S_j) = 1$ for $j \neq i$. Then $\alpha_0$, $\alpha_1$ and $\alpha_2$ generate the character group freely. Let $\chi$ denote the restriction of the character of $\eta^4$ for $\PSL_2(\ZZ)$ to $\Gamma^3$. Then $\chi(S_i) = -1$ for all $i$, so that $\chi = \alpha_0\alpha_1\alpha_2$. One easily checks that conjugation by $T$ permutes the generating characters by $\alpha_0 \mapsto \alpha_1 \mapsto \alpha_2 \mapsto \alpha_0$. The orbit decomposition for the characters is thus
\[
  \Hom(\Gamma^3,\CC^\times) = \{1\} \cup \{\alpha_0\alpha_1\alpha_2\} \cup \{\alpha_0,\alpha_1,\alpha_2\} \cup \{\alpha_1\alpha_2,\alpha_0\alpha_2,\alpha_0\alpha_1\}.
\]
The cuspidal characters are $\{1,\alpha_1\alpha_2,\alpha_0\alpha_2,\alpha_0\alpha_1\}$.

The following table lists the exponents and minimal weights for $\Ind \phi$ as $\phi$ ranges over the two nontrivial orbits above. The exponents can be computed using the identity $S_0S_1S_2 = T^{-3}$, since the characteristic polynomial of $\Ind\phi(T)$ is $X^3-\phi(T^3)$.
\[
\begin{array}{c|c|c}
\textrm{Orbit} & \textrm{Exponents} & \textrm{Minimal weight}\\
\hline
\{\alpha_1\alpha_2,\alpha_0\alpha_2,\alpha_0\alpha_1\}& 0,~ 1/3,~ 2/3&2\\
\{\alpha_0,\alpha_1,\alpha_2\}&1/6,~1/2,~5/6&4\\
\end{array}
\]
The modular forms for $\alpha_1\alpha_2$, $\alpha_0\alpha_2$ and $\alpha_0\alpha_1$ can be found inside the space $M_2(\Gamma(3))$, which is a three-dimensional space of modular forms spanned by Eisenstein series. If $G_\alpha$ is the Eisenstein series of level $3$ associated to the cusp $\alpha$, then equation (4) in Section VII of \cite{Schoeneberg} shows that $G_\alpha|g = G_{g^t\alpha}$ for all $g \in \SL_2(\ZZ)$. Using this, one finds
\begin{align*}
  A &= G_0+G_1-G_2-G_\infty \in M_2(\alpha_0\alpha_1),\\
  B &= G_0-G_1+G_2-G_\infty \in M_2(\alpha_0\alpha_2),\\
  C &= G_0- G_1- G_2+G_\infty \in M_2(\alpha_1\alpha_2).
\end{align*}
These modular forms satisfy the quadratic relation
\[
  A^2+\zeta B^2-(\zeta+1)C^2 = 0,
\]
which is an example of the relation (\ref{equation:definingEquations}) from Section \ref{coherentSheaves}. In this case $\cL_2$ again has the correct degree of $1/2$ and we can give the automorphic description:
\[
  S(\Gamma^3) \cong \frac{\CC[A,B,C]}{(A^2+\zeta B^2-(\zeta+1)C^2)}.
\]
If $\rho$ denotes a representation of $\Gamma^3$ and $L$ denotes a choice of exponents for $\rho(T^3)$, then
\[
  \GM_L(\rho) \cong M_L(\rho) \oplus M_L(\rho\otimes \alpha_0\alpha_1)\oplus M_L(\rho\otimes \alpha_0\alpha_2)\oplus M_L(\rho\otimes \alpha_1\alpha_2).
\]
By Theorem \ref{t:geigleLenzing}, the $S(\Gamma^3)$-module $\GM_L(\rho)$ is maximal Cohen-Macaulay, although it no longer need be free.

Since the signature of $\sM_{\Gamma^3}$ is $(2,2,2)$, Theorem \ref{thm:ExistenceOfRank2Indec} states that there exists an indecomposable bundle of rank two on $\sM_{\Gamma^3}$. To give a modular description of such a bundle, consider the following family of representations $\rho_{z}$ indexed by $z \in \PP^1$:
\begin{align*}
  \rho_z(S_0) &= \twomat {-1}10{1}, & \rho_z(S_1) & = \twomat {-1}z0{1}, & \rho_z(S_2) &=\twomat {-1}00{1},
\end{align*}
if $z \neq \infty$, while if $z = \infty$ set
\begin{align*}
\rho_{\infty}(S_0) &= \twomat {-1}00{1}, & \rho_\infty(S_1) &= \twomat {-1}10{1}, & \rho_\infty(S_2) &= \twomat {-1}00{1}.
\end{align*}
These are all of the extensions of the trivial character by $\chi$, up to isomorphism as abstract representations (not as extensions). Note that $\Omega^1_{\sM_{\Gamma^3}} \cong \cV(\chi)$ so if $\rho=\rho_z$ is one of the above representations, then we get a corresponding extension of vector bundles 
$$
0 \rightarrow \Omega^1_{\sM_{\Gamma^3}}  \rightarrow \cV(\rho) \rightarrow \cO_{\sM_{\Gamma^3}} \rightarrow 0.
$$
As in the proof of Theorem \ref{thm:ExistenceOfRank2Indec}, the vector bundle $\cV(\rho)$ is indecomposable if and only if the connecting homomorphism $\delta$ in the long exact sequence
$$
0\rightarrow H^0(\sM_{\Gamma^3},\Omega^1_{\sM_{\Gamma^3}} )\rightarrow  H^0(\sM_{\Gamma^3},\cV(\rho))\rightarrow H^0(\sM_{\Gamma^3}, \cO_{\sM_{\Gamma^3}}) \stackrel{\delta}\rightarrow H^1(\sM_{\Gamma^3},\Omega^1_{\sM_{\Gamma^3}} ) \rightarrow \ldots 
$$
is nontrivial. Since $H^0(\sM_{\Gamma^3},\Omega^1_{\sM_{\Gamma^3}} )=0$ and $\dim H^0(\sM_{\Gamma^3},\cO_{\sM_{\Gamma^3}} )=1$, the nontriviality of $\delta$ is equivalent to $H^0(\sM_{\Gamma^3},\cV(\rho))= M_0(\Gamma^3,\rho)=  0$. One can show that there exists a unique $z_0$ such that $M_0(\Gamma^3,\rho_{z_0}) \neq 0$, and hence there exists a unique $z_0$ such that the bundle associated to $\rho_{z_0}$ decomposes as a direct sum of two line bundles. All other $z \in \PP^1$ yield indecomposable and isomorphic bundles of rank two. 

To compute the value $z_0 \in \PP^1$ corresponding to a split bundle explicitly, note that a form $\stwovec ab \in M_0(\Gamma^3,\rho_z)$ satisfies $b \in M_0(\Gamma^3,1)$, so that $b$ is constant, and $a' \in M_2(\Gamma^3,\chi)$. In particular, the derivative $a'$ is independent of $z \in \PP^1$. To determine $M_2(\Gamma^3,\chi)$, note first that it is not hard to see that $\dim M_2(\Gamma^3,\chi) = 1$. Hence since $M_2(\Gamma^3,\chi)$ clearly contains $\eta^4$, it is in fact spanned by this form. It follows that $a'$ is a scalar multiple of $\eta^4$. Hence after rescaling $\stwovec ab$, we may assume that $a$ is an antiderivative of $\eta^4$.

Using the transformation law for functions in $M_0(\Gamma^3,\rho_z)$, one sees that since $\rho_z(S_2)$ is diagonal, necessarily this antiderivative $a$ must vanish at $\tau_2 = i+2$, the fixed point of $S_2$. This pins $a$ down uniquely as the integral
\[
  a(\tau) = \int_{i+2}^\tau \eta^4(z)dz.
\]
Examination of the transformation law of $\stwovec ab$ at the other two elliptic points $\tau_0 = i$ and $\tau_1 = i+1$ shows that $z_0 = \frac{a(i+1)}{a(i)}$. Thus,
\[
  \frac{1}{z_0} = \frac{\int_{i+2}^{i}\eta^4d\tau}{\int_{i+2}^{i+1}\eta^4d\tau} = 1+\frac{\int_{i+1}^{i}\eta^4d\tau}{\int_{i+2}^{i+1}\eta^4d\tau} = 1 + \chi(T^{-1}) = 1 + e^{-2\pi i/6},
\]
so that if $z \neq \frac 16(3 +\sqrt{3}i)$, then $\cV(\rho_z)$ is indecomposable. For such $z$, the module $\GM(\rho_z)$ is maximal Cohen-Macaulay but not free over $S(\Gamma^3)$, by Proposition \ref{p:notfree}.

\subsection{A nonnormal subgroup of signature (2,2,3) }
The previous examples were normal subgroups of $\PSL_2(\ZZ)$ with cyclic quotient, which simplified some of the computations. Our final example concerns a nonnormal subgroup of index $4$. There is a well-known isomorphism $S_4 \cong \PSL_2(\ZZ/4\ZZ)$. Let $G$ be the subgroup of $\PSL_2(\ZZ)$ generated by $\Gamma(4)$ and the matrices $S = \stwomat 0{-1}10$ and $A = \stwomat 1112$. Then $G/\Gamma(4)$ is isomorphic with $S_3$ and the image of $G$ in $\PSL_2(\ZZ/4\ZZ)$ is equal to the following set of representative matrices:
\begin{align*}
\twomat 1001 & \equiv I& \twomat 1112 & \equiv A & \twomat 2331 & \equiv A^2\\
\twomat 0310  &\equiv S & \twomat 1233 &\equiv SA & \twomat 3121& \equiv SA^2
\end{align*}
This describes the subgroup $G$ of index $4$ in $\PSL_2(\ZZ)$ as a congruence subgroup. The elements $1$, $T$, $T^2$ and $T^3$ are a full set of nontrivial coset representatives of $G$ in $\PSL_2(\ZZ)$. Thus, the only possible elliptic points for $G$ are the $G$-orbits of $i$, $i+1$, $i+2$, $i+3$ and $\zeta$, $\zeta+1$, $\zeta+2$, $\zeta+3$, where $\zeta = e^{2\pi i/3}$. By computing the full stabilizers of these points in $\PSL_2(\ZZ)$ and then considering the congruence description of $G$, one easily sees that the elliptic points for $G$ are $\tau_0 = i$, $\tau_1 = i+3$ and $\tau_2 = \zeta+2$. Thus $G$ is of genus zero with a single cusp of width $4$ and three elliptic points of signature $(2,2,3)$. By Proposition \ref{p:fuchsianpresentation}, $G$ is generated freely by the matrices
\begin{align*}
S_0 &= S, & S_1 &= T^3ST^{-3}, & R_2 &= T^2RT^{-2} = T^2ST^{-1}.
\end{align*}

Define characters of $G$ by setting $\alpha_0(S_0) = -1$, $\alpha_0(S_1) = \alpha_0(R_2) = 1$. Define $\alpha_1$ similarly but with $S_0$ and $S_1$ permuted, and define $\alpha_2$ by setting $\alpha_2(R_2) = \zeta$, $\alpha_2(S_0) = \alpha_2(S_1) = 1$. These characters generate the character group $\Hom(G,\CC^\times)$. If $\chi$ is the usual character of $\PSL_2(\ZZ)$ then $\chi(S) = -1$ and $\chi(R) = \zeta^2$. Thus since $G$ contains elliptic elements of order $2$ and $3$, its restriction to $G$ remains order $6$. In fact, we have $\chi = \alpha_0\alpha_1\alpha_2^2$ as characters of $G$. Since $T^4 = S_1R_2S_0$, and a character $\phi$ for $G$ is cupsidal if and only if $\phi(T^4) = 1$, the cuspidal characters are $1$ and $\alpha_0\alpha_1 = \chi^3$. Unfortunately in this case $\cL_2$ has degree 2/3, so $\cL_2$ does not generate the non-torsion part of the Picard group, since $m = \lcm(2,2,3) = 6$. Therefore tensoring $\cL_2$ with all the cuspidal characters of $G$ fails to capture all of the ring $S(G)$ of geometrically weighted modular forms for $G$, and so we cannot give a simple automorphic description of $S(G)$ as we did in the previous examples.

In order to determine the Hilbert-Poincar\'{e} polynomials of the characters $\chi^n$ and to describe the modules $M(G,\chi^n)$, observe that the trivial representation of $G$ induces to the standard representation of $S_4 \cong \PSL_2(\ZZ)/\Gamma(4)$, and so it decomposes into a one-dimesional trivial character and a three dimensional irreducible representation. Since $T$ corresponds to the cycle $(1234)$ in this representation, the exponents for the three-dimensional irrep are $1/4$, $1/2$ and $3/4$. Hence $\Tr(L) =  3/2$, and for three-dimensional irreps the minimal weight is $4\Tr(L)-2$, which in this case is $4$. Thus the Hilbert-Poincar\'{e} polynomial for the trivial character of $G$ is $\frac{1+T^4+T^6+T^8}{(1-T^4)(1-T^6)}$. This is the Hilbert-Poincar\'{e} polynomial for the ring of classical scalar modular forms for $G$. More generally, twisting by $\chi$ and using the formulae of Section 6 of \cite{CandeloriFranc} yields the following Hilbert-Poincar\'{e} polynomials for the powers of $\chi$:
\begin{align*}
1 :& \quad \frac{1 + T^4 + T^6 + T^8}{(1-T^4)(1-T^6)} & \chi :&\quad \frac{T^2 + T^6 + T^8 + T^{10}}{(1-T^4)(1-T^6)} & \chi^2 :&\quad \frac{2T^4 + T^6 + T^8}{(1-T^4)(1-T^6)} \\
 \chi^3 :&\quad \frac{T^2+ T^4 + 2T^6}{(1-T^4)(1-T^6)} & \chi^4 :&\quad \frac{T^4 + T^6 + 2T^8}{(1-T^4)(1-T^6)} & \chi^{5} :&\quad \frac{T^2 + T^4 + T^6 + T^{10}}{(1-T^4)(1-T^6)} 
\end{align*}
In order to describe a rank two indecomposable bundle for $G$, we first classify all indecomposable representations of rank $2$. The classification of such representations containing the trivial representation is as follows -- the general case can be deduced from this by tensoring with a character.
\begin{prop}
\label{prop:indecomposableRepsOfG}
Up to isomorphism of $\rho$, the following lists all of the nonsplit indecomposable rank $2$ representations of $G$ containing a copy of the trivial representation: 
\begin{enumerate}
\item there is a single extension
\[
  0 \to 1 \to \rho \to \chi^{3} \to 0
\]
given by
\begin{align*}
\rho(S_0) &= \twomat{1}{1}{0}{-1} & \rho(S_1) &= \twomat{1}{0}{0}{-1} & \rho(R_2) &= \twomat{1}{0}{0}{1}. 
\end{align*}
\item there are two infinite families of nonisomorphic representations parameterized by $\PP^1$ which arise from extensions
\[
  0 \to 1 \to \rho \to \chi^{-a} \to 0
\]
when $a = 1$ or $5$, given by
\begin{align*}
\rho(S_0) &= \twomat{1}{1}{0}{-1} & \rho(S_1) &= \twomat{1}{z}{0}{-1} & \rho(R_2) &= \twomat{1}{0}{0}{\zeta^a}\\ 
\rho(S_0) &= \twomat{1}{0}{0}{-1} & \rho(S_1) &= \twomat{1}{1}{0}{-1} & \rho(R_2) &= \twomat{1}{0}{0}{\zeta^a}
\end{align*}
where $z \in \CC$.
\item there are two extensions
\[
  0 \to 1 \to \rho \to \alpha_0\chi^{-a}\to 0
\]
where $a = 2$ or $4$, given by
\begin{align*}
\rho(S_0) &= \twomat{1}{1}{0}{-1} & \rho(S_1) &= \twomat{1}{0}{0}{1} & \rho(R_2) &= \twomat{1}{0}{0}{\zeta^a}. 
\end{align*}
\item there are two extensions
\[
0\to 1 \to \rho \to \alpha_0\chi^{-a} \to 0
\]
when $a = 1$ or $5$ given by
\begin{align*}
\rho(S_0) &= \twomat{1}{0}{0}{1} & \rho(S_1) &= \twomat{1}{1}{0}{-1} & \rho(R_2) &= \twomat{1}{0}{0}{\zeta^a}. 
\end{align*}
\end{enumerate}
\end{prop}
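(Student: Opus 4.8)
\section*{Proof proposal}

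The plan is to use the presentation of $G$ from Proposition \ref{p:fuchsianpresentation}: since $G$ has a single cusp, it is the free product $\langle S_0\rangle * \langle S_1\rangle * \langle R_2\rangle$ of cyclic groups of orders $2,2,3$, so a representation of $G$ is simply an unconstrained triple of matrices $\rho(S_0),\rho(S_1),\rho(R_2)$ of orders dividing $2,2,3$. A rank two representation $\rho$ with a nonzero fixed vector that is not isomorphic to $1\oplus\psi$ is exactly a nonsplit extension $0\to 1\to\rho\to\psi\to 0$ for a uniquely determined character $\psi\in\Hom(G,\CC^\times)$, uniqueness of $\psi$ holding because the fixed subspace of an indecomposable $\rho$ is a line. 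For the $\psi$ that actually occur we will have $\psi\neq 1$ (by the dimension count in step (i) below, or because $G^{\mathrm{ab}}$ is finite so $\Ext^1_G(1,1)=0$), so the composition factors $\{1,\psi\}$ of $\rho$ are distinct; consequently such a $\rho$ is automatically indecomposable, since a direct sum of two rank one representations would be semisimple and so split, and distinct $\psi$ give non-isomorphic $\rho$. Moreover, since any endomorphism of such a $\rho$ is scalar, two such extensions give isomorphic $\rho$ precisely when they define the same point of $\PP\bigl(\Ext^1_G(\psi,1)\bigr)$. Thus the classification amounts to (i) computing $\Ext^1_G(\psi,1)$ for each of the twelve characters $\psi$, and (ii) exhibiting explicit matrices for each class.

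For (i), writing $\rho(g)=\twomat{1}{c(g)}{0}{\psi(g)}$ turns a nonsplit extension into a twisted cocycle $c\colon G\to\CC$ satisfying $c(gh)=c(h)+\psi(h)c(g)$, modulo the coboundaries $c=\lambda(1-\psi)$. Because $G$ is the free product of the cyclic groups $\langle x\rangle$, $x\in\{S_0,S_1,R_2\}$, such a $c$ is freely determined by the three values $c(x)$ subject only to the relations $c(x^{p_x})=0$, and expanding the cocycle identity gives $c(x^{p_x})=c(x)\bigl(1+\psi(x)+\cdots+\psi(x)^{p_x-1}\bigr)$. Since $\psi(x)$ is a $p_x$-th root of unity, this geometric sum vanishes when $\psi(x)\neq 1$ and equals $p_x$ when $\psi(x)=1$; hence the cocycle space has dimension $\#\{x:\psi(x)\neq 1\}$, while the coboundary space is one dimensional iff $\psi\neq 1$. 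Therefore $\Ext^1_G(\psi,1)\neq 0$ exactly when at least two of $\psi(S_0),\psi(S_1),\psi(R_2)$ are nontrivial, with dimension $2$ if all three are nontrivial and $1$ if precisely two are.

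For (ii), I would list the twelve characters $\alpha_0^{i}\alpha_1^{j}\alpha_2^{k}$ ($i,j\in\{0,1\}$, $k\in\{0,1,2\}$), pick out the seven with nonzero $\Ext^1$, and rewrite them using $\chi=\alpha_0\alpha_1\alpha_2^2$: the character $\chi^{3}=\alpha_0\alpha_1$ is nontrivial only at $S_0,S_1$ (case (1), a single extension); $\chi^{\pm1}$ are nontrivial at all three generators (case (2), two $\PP^1$-families); $\alpha_0\chi^{-a}$ for $a=2,4$ is nontrivial at $S_0,R_2$ (case (3)); and $\alpha_0\chi^{-a}$ for $a=1,5$ is nontrivial at $S_1,R_2$ (case (4)). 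In each case the value $c(x)$ at a generator with $\psi(x)=1$ vanishes; one then subtracts the coboundary $\lambda(1-\psi)$ to arrange $c(R_2)=0$ and rescales the remaining free coordinate to $1$, except in the two-dimensional case, where one parametrises the pencil of remaining cocycles by $z\in\CC$ and adds the complementary point $z=\infty$. Reading off $\twomat{1}{c(g)}{0}{\psi(g)}$ on $S_0,S_1,R_2$ then reproduces the matrices in (1)--(4), using that the displayed diagonal entry equals $\psi(R_2)$. By the first paragraph the resulting list is irredundant and exhaustive.

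The cohomological input is elementary precisely because $G$ is a free product of finite cyclic groups, so the main obstacle is the bookkeeping in step (ii): correctly matching each character to the right power or twist of $\chi$ — equivalently, organising the characters into $T$-conjugacy orbits as in the earlier examples — and matching the normalised cocycle to the exact matrix normalisation chosen in the statement, including the exponents $\zeta^a$. It is probably cleanest to isolate the equivalence ``indecomposable rank two with a trivial sub $\leftrightarrow$ a point of $\PP(\Ext^1)$'' as a preliminary lemma, and then carry out the enumeration case by case.
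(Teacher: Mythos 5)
Your proposal is correct and follows essentially the same route as the paper: both reduce the classification to computing the twisted cocycle spaces $Z^1$ and coboundaries $B^1$ for each quotient character, using that $G$ is freely generated by $S_0,S_1,R_2$ so a cocycle is determined by its three values subject only to $c(x^{p_x})=0$, and then identify abstract isomorphism classes of nonsplit extensions with $\PP(H^1)$. Your uniform dimension count $\dim Z^1=\#\{x:\psi(x)\neq 1\}$ is a slightly cleaner packaging of the case analysis the paper carries out explicitly for $\phi=\chi^{-a}$ (omitting the other six characters), but it is the same argument.
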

\begin{proof}
This is a standard cohomological computation. Given an extension
\[
 0 \to 1 \to \rho \to \phi \to 0
\]
for some character $\phi$ of $G$, let $\kappa$ denote the top right entry of $\rho$. This is a $1$-cocycle living in
\[
 Z^1 = \{\kappa \colon G \to \CC \mid \kappa(gh) = \phi(h)\kappa(g)+\kappa(h)\}.
\]
The isomorphism classes of extensions of $\phi$ by $1$ are parameterized by the cohomology group $H^1 = Z^1/B^1$ where 
\[B^1 = \{\kappa \in Z^1 \mid \exists z \in \CC \textrm{ such that } \kappa(g) = (\phi(g)-1)z\}.\]
Being isomorphic as extensions is stricter than being isomorphic as abstract representations. The isomorphy classes as abstract representations arising from the nontrivial extensions are identified with the projective space $\PP(H^1)$. We will describe the computation for the extensions of $\phi = \chi^{-a}$ by $1$ and omit the details for the other six characters.

There is an embedding $f \colon Z^1 \to \CC^3$ defined by $f(\kappa) = (\kappa(S_0),\kappa(S_1),\kappa(R_2))$. Observe that
\[
  f(B^1) = \{(((-1)^{a}-1)z,((-1)^{a}-1)z, (\zeta^{a}-1)z) \mid z \in\CC\}.
\]
In particular, $B^1$ is zero dimensional if $a = 0$ and otherwise it is one dimensional. To determine $Z^1$, let us first examine what the cocycle conditions enforces. Of course $\kappa(1) = 0$. Since $S_0^2 = S_1^2 = R_2^3 = 1$ we deduce that
\[
0=((-1)^a+1)\kappa(S_1) =  ((-1)^a+1)\kappa(S_2) =  (\zeta^{2a}+\zeta^a+1)\kappa(U)
\]
If $a = 0$ then $\kappa = 0$ and $Z^1$ is also zero dimensional. If $a =  2,4$ then $\kappa(S_0) = \kappa(S_1) = 0$ and $Z^1$ is at most one dimensional. In this case there are no nontrivial extensions. If $a = 3$ then $\kappa(R_2) = 0$ and $\kappa(S_0)$ and $\kappa(S_1)$ can be nonzero. Hence after modding out by $B^1$ we get a one dimensional space of extensions. Finally if $a = 1,5$ then there are no conditions whatsoever and $Z^1$ is three dimensional, hence there is a two dimensional space of extensions. Since the nontrivial extensions break up into isomorphism classes according to $\PP(Z^1/B^1)$, one deduces the stated classification.
\end{proof}

Let $\rho$ denote an indecomposable but not irreducible rank two representation of the form
$$
0 \rightarrow \chi^{5} \rightarrow \rho \rightarrow 1 \rightarrow 0.
$$
As with the subgroup of index three, the corresponding bundle $\cV(\rho)$ on $\sM_G$ is decomposable if and only if there exist nonzero modular forms for $\rho$ of weight zero. To write down such a form explicitly, note that by Proposition \ref{prop:indecomposableRepsOfG}, up to isomorphism $\rho$ must be of the form $\rho_z$ for $z\in \CC$, or $\rho_{\infty}$, where
\begin{align*}
\rho_z(S_1) &= \twomat{-1}{-1}{0}{1} & \rho_z(S_2) &= \twomat{-1}{z}{0}{1} & \rho_z(R_2) &= \twomat{\zeta^2}{0}{0}{1}\\ 
\rho_\infty(S_1) &= \twomat{-1}{0}{0}{1} & \rho_\infty(S_2) &= \twomat{-1}{-1}{0}{1} & \rho_\infty(R_2) &= \twomat{\zeta^2}{0}{0}{1}. 
\end{align*}
Let $F = \stwovec a b$ be nonconstant of weight zero for one of the representations $\rho_z$. As above, $b$ must be a nonzero scalar and $a' \in M_2(G,\chi^5)$. Since we computed the Hilbert-Poincar\'{e} polynomial of $\chi^5$ on $G$ to be 
\[
\sum_{k \in \ZZ} \dim M_k(G,\chi^5)T^k = \frac{T^2+T^4+T^6+T^{10}}{(1-T^4)(1-T^6)}
\]
it follows that $M_2(G,\chi^5)$ is one dimensional.

To describe $M_2(G,\chi^{5})$, observe that the induction $\Ind \chi^{5}$ to $\PSL_2(\ZZ)$ is a four dimensional representation that breaks up into a copy of $\chi^{5}$ regarded as a character of $\PSL_2(\ZZ)$, and a three dimensional irreducible representation with exponents $1/12$, $1/3$ and $7/12$. It is only the three dimensional irreducible representation that contributes to $M_2(G,\chi^5)$. Using the results of \cite{FrancMason2} on three dimensional representations of $\PSL_2(\ZZ)$, one can show that $M_2(G,\chi^{5})$ is spanned by a linear combination of forms with Fourier coefficients:
\begin{align*}
  f_1 &= q^{\frac 1{12}}\left(1 + 2q-5q^2-10q^3+9q^4 + 14q^5 -10q^6+14q^8+ \cdots\right),\\
  f_2 &= q^{\frac{1}{3}}\left(1-4q^2+2q^4+8q^6-5q^8+ \cdots\right),\\
  f_3 &= q^{\frac{7}{12}}\left(1-2q+q^2-2q^3+4q^5+\cdots\right).
\end{align*}
One finds that
\begin{align*}
f_1 &= \left(\frac{\eta(q^2)^5}{\eta(q)\eta(q^4)^2}\right)^2, & f_2 &= \eta(q^2)^4, & f_3 &= \left(\frac{\eta(q)\eta(q^4)^2}{\eta(q^2)}\right)^2.
\end{align*}
Using the transformation law for $\eta^2$, it is not hard to show that $f = f_1+4f_3$ spans $M_2(G,\chi^{5})$. Hence, after possibly rescaling $\stwovec ab \in M_0(G,\rho_{z})$, we may assume that $a' = f$. As in the case of the group of index $3$, since $\rho_z(R_2)$ is diagonal, we must have
\[
  a(\tau) = \int_{\zeta+2}^\tau f(z)dz.
\]
If $a(\tau_0) = a(\tau_1) = 0$ then one easily checks from the transformation law for $a$ that it is in fact a scalar modular form of weight zero for $\chi^{5}$ on $G$, hence it must be constant. Since $a$ is not constant, at least one of $a(\tau_0)$ or $a(\tau_1)$ is nonzero. The transformation law for $a$ then shows that the point $z_0 = (a(\tau_0):a(\tau_1)) \in \PP^1$ corresponds to the unique representation $\rho_{z_0}$ yielding the decomposable bundle, and all other bundles $\cV(\rho_z)$ are indecomposable. We used \emph{Sage} to compute this value numerically:
\[
  z_0 = \frac{a(i+3)}{a(i)} = 1.0910849089\ldots + 0.4942818186\ldots i.
\]
Computer experiments do not suggest that $z_0$ is algebraic. This may be because it is obtained by integrating a classical scalar congruence form between CM points corresponding to different CM fields, in this case $\QQ(i)$ and $\QQ(\zeta)$, and so the two different CM periods intervene.\footnote{In the case of the subgroup of index three, all three elliptic points are for the same CM field $\QQ(i)$, and we obtained an algebraic value for $z_0$ in that case.} Note that if $a$ were an ordinary scalar modular form, and not an antiderivative of such a form, then of course this ratio of CM values would be algebraic by \cite{Shimura}. These computations suggest that there may be some interesting arithmetic encoded in the CM values of vector valued modular forms for some nonunitarizable representations of congruence groups.

\renewcommand\refname{References}
\bibliographystyle{alpha}
\bibliography{Tilting}
\end{document}